\numberwithin{equation}{section}
\theoremstyle{plain}
\newtheorem{lemma}{Lemma}[section]
\newtheorem{theorem}{Theorem}[section]
\newtheorem{corollary}{Corollary}[theorem]
\theoremstyle{remark}
\newtheorem{remark}{\bf Remark}[section]
\theoremstyle{remark}
\DeclareMathOperator*{\argmin}{argmin}
\journal{Journal of \LaTeX\ Templates}
\begin{document}

\begin{frontmatter}

\title{Extended Dynamic Mode Decomposition for Inhomogeneous Problems }
\author[mymainaddress]{Hannah Lu\fnref{email1}}
\fntext[email1]{email: \texttt{hannahlu{@}stanford.edu}}
\author[mymainaddress]{Daniel M. Tartakovsky\corref{mycorrespondingauthor}}
\cortext[mycorrespondingauthor]{Corresponding author}
\ead{tartakovsky@stanford.edu}
\address[mymainaddress]{Department of Energy Resources Engineering, Stanford University, Stanford, CA 94305, USA}

\begin{abstract}
Dynamic mode decomposition (DMD) is a powerful data-driven technique for construction of reduced-order models of complex dynamical systems. Multiple numerical tests have demonstrated the accuracy and efficiency of DMD, but mostly for systems described by partial differential equations (PDEs) with homogeneous boundary conditions. We propose an extended dynamic mode decomposition (xDMD) approach to cope with the potential unknown sources/sinks in PDEs. Motivated by similar ideas in deep neural networks, we equipped our xDMD with two new features. First, it has a bias term, which accounts for inhomogeneity of PDEs  and/or boundary conditions. Second, instead of learning a flow map, xDMD learns the residual increment by subtracting the identity operator. Our theoretical error analysis demonstrates the improved accuracy of xDMD relative to standard DMD. Several numerical examples are presented to illustrate this result.
\end{abstract}

\begin{keyword}
reduced-order model; data-driven; learning; inhomogeneous PDE, residual network
\end{keyword}

\end{frontmatter}

\linenumbers

\section{Introduction}

Complexity of many, if not most, physical and biological phenomena and paucity of measurements undermine the reliability of purely statistical descriptors. Instead, models of such systems are inferred or ``learned'' from both observational and simulated data and reflect the fundamental laws of nature (e.g., conservation of mass and energy). Various sparse regression techniques~\cite{schmidt2009distilling, brunton2016discovering, schaeffer2017learning} use a proposed dictionary to ``discover'' the governing equations from observations. The dictionary, comprising plausible spatial and/or temporal derivatives of a state variable, provides functional approximations of different physical laws; dynamic mode decomposition (DMD) was used to inform the dictionary composition~\cite{li2017extended, korda2018convergence, williams2015data}. The data for sparse regression are allowed to be noisy~\cite{schaeffer2017sparse}, corrupted~\cite{tran2017exact}, and limited~\cite{schaeffer2018extracting}. Various flavors of deep neural networks (DNN) provide a related dictionary-based approach to PDE learning~\cite{raissi2019physics, tartakovsky2018learning, geneva2020modeling}. These and other techniques are as good as a dictionary on which they are based.

A conceptually different, dictionary-free, framework for data-informed predictions is to construct a surrogate (aka reduced-order) model, instead of learning a governing PDE. This framework is often classified as ``unsupervised learning'' and ``equation-free''. Much of the research in this field deals with dynamical systems, for which training data are generated by either ordinary differential equations (ODEs) or partial differential equations (PDEs) after spatial discretization. In this context, DMD can be used to construct an optimal linear approximation model for the unknown system~\cite{schmid2010dynamic} and to learn the unknown dynamics of chosen observables, rather than of the state itself~\cite{tu2014dynamic}. The latter is accomplished by utilizing the Koopman theory~\cite{koopman1931hamiltonian} in order to construct linear models on the observable space, instead of seeking for nonlinear models on the state space~\cite{brunton2017chaos}. Physics-guided selection of observables provides not only better accuracy~\cite{nathan2018applied, lu2019predictive, lu2020lagrangian}, but also a bridge between the understanding of data and physics. Likewise, DNN can be used to build nonlinear surrogate models for ODEs~\cite{
qin2019data, rudy2019deep} and PDEs~\cite{raissi2019physics, long2019pde, 
wu2020data}. DNN-based surrogates and reduced-order models (ROMs)~\cite{hesthaven2018non, pawar2019deep
} are invaluable in applications that require a large number of model solves, such as 
inverse modeling~\cite{raissi2019physics, mo2019deep, liu2019deep
} and uncertainty quantification~\cite{chan2018machine, karumuri2020simulator, tripathy2018deep, zhu2018bayesian}.

Our study contributes to this second framework by extending the range of applicability of DMD-based ROMs to dynamical systems described by inhomogeneous PDEs with inhomogeneous boundary conditions. Our extended dynamic mode decomposition (xDMD) borrows ideas from the recent work on residual neural networks (ResNet) to provide an optimal linear approximation model for such systems. Our generalization of the standard DMD includes two ingredients: an added bias term and residual learning. The first builds upon the generalized ResNet~\cite{chen2020gresnet} that introduces a bias term to model the dynamics described by underlying inhomogeneous ODEs. We extend this idea to systems described by inhomogeneous PDEs and prove the accuracy improvement induced by the added bias term. The second ingredient of xDMD is the learning of effective increments (i.e., the residual of subtracting identity from a flow map) rather than the flow map itself. Although mathematically equivalent to flow map learning, this strategy proved to be highly advantageous in practice and gained traction in the deep-learning community~\cite{ma2018}, including in its applications to equation recovery~\cite{qin2019data}. To the best of our knowledge, xDMD is first to fuse these two features and to provide a theoretical estimate. 

In section~\ref{sec2}, we provide a problem setup and provide a detailed formulation of xDMD. A formal proof of the accuracy improvements induced by the added bias term is presented in section~\ref{sec3}. A number of numerical experiments are presented in section~\ref{sec4} to evaluate the learning performance of xDMD in terms of representation, extrapolation, interpolation and generalizability. Key results, their implication for applications, and challenges and future work are summarized in section~\ref{sec5}.

\section{Problem Formulation and Extended DMD}
\label{sec2}

We consider a real-valued state variable $u(\mathbf x,t)$, whose dynamics is described by a boundary-value problem
\begin{equation}\label{2-1}
\left\{
\begin{aligned}
&\frac{\partial u}{\partial t} = \mathcal L(u)+ S(\mathbf x), \quad && (\mathbf x, t) \in \mathcal D \times \mathbb R^+;\\
&\mathcal B(u) = b(\mathbf x), && (\mathbf x, t) \in \partial \mathcal D \times \mathbb R^+;\\
&u(\mathbf x,0) = u_0(\mathbf x), && \mathbf x\in \mathcal D.
\end{aligned}
\right.
\end{equation}
Here $t$ denotes time; $\mathbf  x$ is the spatial coordinate; $\mathcal D \subset \mathbb R^d$ is the simulation domain bounded by the surface $\partial \mathcal D$; $\mathcal L$ is a (linear or nonlinear) differential operator that involves spatial derivatives; $\mathcal B$ is the boundary differential operator describing Dirichlet, Neumann, and/or Robin boundary conditions; $S(\mathbf x)$ and $b(\mathbf x)$ represent sources/sinks  and boundary functions, respectively; and $u_0(\mathbf x)$ is the initial state.

The simulation domain is discretized with a mesh consisting of $N$ elements. A suitable numerical approximation of~\eqref{2-1} yields a system of (coupled, nonlinear) ODEs,
\begin{equation}\label{2-2}
\left\{
\begin{aligned}
&\frac{\text d\mathbf u}{\text d t} =\mathbf f(\mathbf u, \mathbf s),&& \mathbf u, \mathbf s\in \mathbb R^N,\\
&\mathbf u (0) = \mathbf u_0,&&\mathbf u_0\in \mathbb R^N,
\end{aligned}
\right.
\end{equation}
where $\mathbf s$ comes from both $S(\mathbf x)$ and $b(\mathbf x)$. Let $\boldsymbol\Phi_{\Delta t}: \mathbb R^N\to \mathbb R^N$ denote a flow map, which relates the discretized system state $\mathbf u$ at time $t=0$ to that at time $t = \Delta t$, where $\Delta t$ is a (sufficiently small) time increment. Since $\mathbf s$ is independent of $t$ and acts as a set of parameters, the system~\eqref{2-2} is time-invariant.  Consequently, there exists a flow map $\boldsymbol\Phi$, depending only on the time difference $t-t_0$, which represents the solution to~\eqref{2-2} as
$\mathbf u(t;\mathbf u_0,t_0,\mathbf s) =\boldsymbol\Phi_{t-t_0}(\mathbf u_0;\mathbf s)$.

Our goal is to learn the  dynamic system $\mathbf f$, or, more precisely, its reduced-order surrogate, using $M$ temporal snapshots of the solutions. Let $\mathbf x^k \equiv \mathbf u(t_k)$ and $\mathbf y^k \equiv \mathbf u(t_k + \Delta t)$ with $k=1,\ldots,M$, where the time lag between the input and output states, $\Delta  t$, is assumed to be independent of $k$ for the sake of convenience. The simulation data consist of $M$ pairs $\{(\mathbf x^k,\mathbf y^k)\}_{k=1}^M$, such that
\begin{equation}\label{eq:y}
\mathbf {y}^k = \boldsymbol\Phi_{\Delta t}(\mathbf {x}^k;\mathbf s), \qquad k=1,\ldots,M.
\end{equation}

\begin{lemma}\label{lemma2-1}
Assume $\mathbf f$ to be Lipschitz continuous with a Lipschitz constant $L$ on a solution manifold $\mathcal M\subset \mathbb R^N$.  Define
\begin{equation}
\mathcal M_{\Delta t} = \{ \mathbf x\in \mathcal M: \boldsymbol\Phi_{\Delta t}(\mathbf x;\mathbf s) \in \mathcal M\}.
\end{equation}
Then, $\boldsymbol\Phi_{\Delta t}$ is Lipschitz continuous on $\mathcal M_{\Delta t}$. Specifically, for any $\mathbf z, \mathbf {\tilde z}\in \mathcal M_{\Delta t}$,
\begin{equation}
\|\boldsymbol\Phi_{\Delta t}(\mathbf z;\mathbf s)-\boldsymbol\Phi_{\Delta t}(\bold{\tilde z};\mathbf s)\|\leq \mathrm{e}^{L\tau}\|\mathbf z-\mathbf {\tilde z}\|, \qquad 0\leq \tau\leq \Delta t.
\end{equation}
\end{lemma}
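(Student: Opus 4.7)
The plan is to reduce the statement to a textbook Grönwall argument for ODE flow maps. Fix $\mathbf z, \tilde{\mathbf z}\in\mathcal M_{\Delta t}$, fix $\tau\in[0,\Delta t]$, and consider the two trajectories $\mathbf u(\sigma)=\boldsymbol\Phi_\sigma(\mathbf z;\mathbf s)$ and $\tilde{\mathbf u}(\sigma)=\boldsymbol\Phi_\sigma(\tilde{\mathbf z};\mathbf s)$ for $\sigma\in[0,\tau]$. Both satisfy the ODE \eqref{2-2} with the common right-hand side $\mathbf f(\cdot,\mathbf s)$, which is the key vehicle for transferring the Lipschitz property of $\mathbf f$ to the flow.

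First I would rewrite each trajectory as a Volterra integral equation
\begin{equation*}
\mathbf u(\sigma) = \mathbf z + \int_0^\sigma \mathbf f(\mathbf u(r),\mathbf s)\,\mathrm dr,
\qquad
\tilde{\mathbf u}(\sigma) = \tilde{\mathbf z} + \int_0^\sigma \mathbf f(\tilde{\mathbf u}(r),\mathbf s)\,\mathrm dr.
\end{equation*}
Subtracting, applying the triangle inequality, and invoking the Lipschitz hypothesis on $\mathbf f$ (valid because the trajectories lie in $\mathcal M$) yields
\begin{equation*}
\|\mathbf u(\sigma)-\tilde{\mathbf u}(\sigma)\|
\leq \|\mathbf z-\tilde{\mathbf z}\| + L\int_0^\sigma \|\mathbf u(r)-\tilde{\mathbf u}(r)\|\,\mathrm dr.
\end{equation*}

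Then I would apply the integral (Grönwall) inequality to the scalar function $\varphi(\sigma):=\|\mathbf u(\sigma)-\tilde{\mathbf u}(\sigma)\|$, which immediately gives $\varphi(\tau)\leq \mathrm e^{L\tau}\|\mathbf z-\tilde{\mathbf z}\|$. Specializing to $\sigma=\tau$ and recognizing that $\mathbf u(\tau)=\boldsymbol\Phi_\tau(\mathbf z;\mathbf s)$ and $\tilde{\mathbf u}(\tau)=\boldsymbol\Phi_\tau(\tilde{\mathbf z};\mathbf s)$ produces exactly the stated bound.

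The only genuine subtlety, and the step that deserves explicit attention, is making sure the Lipschitz estimate on $\mathbf f$ can be applied along the entire trajectory: by construction $\mathcal M_{\Delta t}$ guarantees that both endpoints $\boldsymbol\Phi_0(\mathbf z;\mathbf s)$ and $\boldsymbol\Phi_{\Delta t}(\mathbf z;\mathbf s)$ lie in $\mathcal M$, but I would appeal to the standing assumption that $\mathcal M$ is a solution manifold (i.e.\ forward-invariant under the flow on $[0,\Delta t]$) so that $\mathbf u(r),\tilde{\mathbf u}(r)\in\mathcal M$ for every intermediate $r\in[0,\tau]$. Beyond this routine invariance check, no obstacle arises; the estimate is essentially the continuous-data-dependence theorem for ODEs.
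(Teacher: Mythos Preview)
Your proposal is correct and is precisely the standard Gr\"onwall/continuous-dependence argument that the paper invokes by citation (it does not write out a proof but simply refers to \cite[p.~109]{stuart1998dynamical}); you have supplied exactly that classical argument, so the approaches coincide.
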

\begin{proof}
The proof follows directly from the classical numerical analysis results in, e.g.,~\cite[p.~109]{stuart1998dynamical}.
\end{proof}

Lemma~\ref{lemma2-1} imposes requirements on the snapshots data pairs $\{(\mathbf x^k,\mathbf y^k)\}_{k=1}^M$: the number of data pairs $M$ should be sufficiently large, and  the data should be sufficiently rich for the data  space to cover the solution space of interest. These requirements are consistent with the core of the Koopman operator theory, which underpins the DMD algorithm, e.g.,~\cite[p.~47]{kutz2016dynamic} and others~\cite{williams2015data, tu2014dynamic, rowley2009spectral}. The error analyses of the DMD algorithms~\cite{korda2018convergence, lu2019predictive} also verifies the impact of the selection of observables on the success of Koopman methods.

\subsection{Standard DMD}
\label{sec:sDMD}

Given a  dataset of snapshots, $\{(\mathbf x^k,\mathbf y^k)\}_{k=1}^M$,  DMD constructs a best-fit linear operator $\mathbf A\in \mathbb R^{N\times N}$ such that
\begin{equation}\label{2-6}
\mathbf y^{k}\approx \mathbf A\mathbf x^{k}, \qquad k = 1,\ldots,M.
\end{equation}
Therefore, the matrix $\mathbf A$ is determined in a least square sense
\begin{equation}\label{2-7}
\mathbf A = \argmin_{\hat {\mathbf A}\in \mathbb R^{N\times N}}\frac{1}{M}\sum_{k=1}^M\|\mathbf y^{k}-\hat {\mathbf A}\mathbf x^{k}\|^2.
\end{equation}
Typically, one rewrites the dataset $\{(\mathbf x^k,\mathbf y^k)\}_{k=1}^M$ in  a matrix form,
\begin{equation}\label{2-8}
\mathbf X = \begin{bmatrix}
\mid&\mid&&\mid\\
\mathbf x^1&\mathbf x^2&\cdots&\mathbf x^{M}\\
\mid&\mid&&\mid
\end{bmatrix}_{N\times M} 
\quad\text{and}\quad
\mathbf Y = \begin{bmatrix}
\mid&\mid&&\mid\\
\mathbf y^1&\mathbf y^2&\cdots&\mathbf y^{M}\\
\mid&\mid&&\mid
\end{bmatrix}_{N\times M}.
\end{equation}
Then, $\mathbf A$ is computed as
\begin{equation}\label{2-9}
\mathbf A = \mathbf Y\mathbf X^\dag \qquad\qquad\qquad \text{(standard DMD)},
\end{equation}
where $\dag$ denotes the Moore-Penrose inverse.

\begin{remark}
The Moore-Penrose inverse is computed via singular value decomposition (SVD), which requires certain truncation criteria to maintain computational stability. In all our numerical tests, we use the default truncation in the \texttt{pinv} command of Matlab.
\end{remark}
\begin{remark}
In a typical DMD algorithm, e.g.,~\cite[p.~7]{kutz2016dynamic}, a reduced-order model $\tilde {\mathbf A}$ is derived by projecting $\mathbf A$ onto the proper orthogonal decomposition (POD) modes. Since the major goal of our study is to obtain a linear approximation model of inhomogeneous PDEs, for which standard DMD algorithms fail, we omit the order-reduction procedure for simplicity.
\end{remark}

\subsection{Generalized DMD}
\label{sec:gDMD}

In order to cope with potential inhomogeneity of the underlying dynamics, the following modification is made in~\cite{chen2020gresnet}:
\begin{equation}\label{2-10}
\mathbf y^{k}\approx \mathbf A_\text{g}\mathbf x^{k}+\mathbf b, \qquad k = 1,\ldots,M.
\end{equation}
The matrix $\mathbf A_\text{g}$ and the vector $ \mathbf b \in \mathbb R^N$ are computed by solving the optimization problem
\begin{equation}\label{2-11}
(\mathbf A_\text{g}, \mathbf b) = \argmin_{\hat {\mathbf A}\in \mathbb R^{N\times N},\hat{\mathbf b}\in \mathbb R^N}\frac{1}{M}\sum_{k=1}^M\|\mathbf y^{k}-\hat {\mathbf A}\mathbf x^{k}-\hat{\mathbf b}\|^2.
\end{equation}
Let us introduce
\begin{equation}\label{2-12}
\tilde{\mathbf X} := \begin{bmatrix}
\mathbf X\\
\mathbf 1
\end{bmatrix}_{(N+1)\times M}
\end{equation}
where $\mathbf 1 := [1,1,\cdots,1]$ is a vector of size $1\times M$. Then $\mathbf A_\text{g}$ and $\mathbf b$ are obtained by
\begin{equation}\label{2-13}
[\mathbf A_\text{g}, \mathbf b] = \mathbf Y\tilde{\mathbf X}^\dag \qquad\qquad\qquad \text{(generalized DMD or gDMD)}.
\end{equation}

\subsection{Residual DMD}
\label{sec:rDMD}

The residual DMD or rDMD borrows a key idea behind ResNet. The latter explicitly introduces the identity operator in a neural network and forces the network to approximate the ``residual" of the input-output map. Although mathematically equivalent, this simple transformation proved to improve  network performance and became increasingly popular in the machine learning community.

Writing $\mathbf A = \mathbf I + \mathbf B$, where $\mathbf I$ is the $(N \times N)$ identity matrix and $\mathbf B$ is the remainder, recasts~\eqref{2-6} as
\begin{equation}\label{2-14}
\mathbf y^k \approx \mathbf x^k+\mathbf B\mathbf x^k.
\end{equation}
The matrix $\mathbf B$ is determined by
\begin{equation}\label{2-15}
\mathbf B =(\mathbf Y-\mathbf X)\mathbf X^\dag \qquad\qquad\qquad \text{(residual DMD or rDMD)}.
\end{equation}
It provides an approximation of the ``effective increment''~\cite[definition 3.1]{qin2019data}, $\boldsymbol\varphi_{\Delta t}$, that is defined as
\begin{equation}\label{2-16}
\boldsymbol\varphi_{\Delta t}(\mathbf u;\mathbf f, \mathbf s) = \Delta t \, \mathbf f(\boldsymbol\Phi_\tau(\mathbf u;\mathbf s))
\end{equation}
for some $0\leq \tau\leq\Delta t$ such that
\begin{equation}\label{2-17}
\mathbf u(t+\Delta t) = \mathbf u(t)+ \boldsymbol\varphi_{\Delta t}(\mathbf u;\mathbf f,\mathbf s).
\end{equation}

\subsection{Extended DMD}
\label{sec:xDMD}

Combining the modification used in the previous two subsections, we arrive at our extended DMD or xDMD,
\begin{equation}\label{2-18}
\mathbf y^k \approx \mathbf x^k +\mathbf B_\text{g} \mathbf x^k+\mathbf b,
\end{equation}
where $\mathbf B_\text{g}$ and $\mathbf b$ are computed as
\begin{equation}\label{2-19}
[\mathbf B_\text{g},\mathbf b] =(\mathbf Y-\mathbf X)\mathbf {\tilde X}^\dag \qquad\qquad\qquad \text{(extended DMD or xDMD)}.
\end{equation}

\section{Relative Performance of Different DMD Formulations}
\label{sec3}

\begin{theorem}\label{thm1}
In the least square sense, gDMD in section~\ref{sec:gDMD} fits the $M$ snapshots data $\mathbf X$ and $\mathbf Y$ better than the standard DMD from section~\ref{sec:sDMD} does,  i.e.,
\begin{equation}\label{3-1}
\frac{1}{M}\sum_{k=1}^M\|\mathbf y^{k}-\mathbf A_\mathrm{g} \mathbf x^k-\mathbf b\|^2\leq \frac{1}{M}\sum_{k=1}^M\|\mathbf y^{k}-\mathbf A\mathbf x^k\|^2.
\end{equation}
\end{theorem}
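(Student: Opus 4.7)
The plan is to exploit the fact that standard DMD is a strictly more constrained optimization problem than gDMD, so any feasible point of the latter that coincides with a solution of the former gives an immediate variational bound. Concretely, I would argue that the standard DMD problem~\eqref{2-7} is precisely the restriction of the gDMD problem~\eqref{2-11} to the hyperplane $\hat{\mathbf{b}} = \mathbf{0}$ in the joint parameter space $\mathbb{R}^{N\times N}\times\mathbb{R}^N$.

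First I would define the two objective functionals
\begin{equation}
J(\hat{\mathbf{A}}) := \frac{1}{M}\sum_{k=1}^M\|\mathbf{y}^k - \hat{\mathbf{A}}\mathbf{x}^k\|^2, \qquad
J_\mathrm{g}(\hat{\mathbf{A}},\hat{\mathbf{b}}) := \frac{1}{M}\sum_{k=1}^M\|\mathbf{y}^k - \hat{\mathbf{A}}\mathbf{x}^k - \hat{\mathbf{b}}\|^2,
\end{equation}
and observe that $J(\hat{\mathbf{A}}) = J_\mathrm{g}(\hat{\mathbf{A}},\mathbf{0})$ for every $\hat{\mathbf{A}}$. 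Next, I would invoke the fact (used implicitly in Section~\ref{sec:sDMD} and~\ref{sec:gDMD}) that the Moore--Penrose formulas $\mathbf{A} = \mathbf{Y}\mathbf{X}^\dag$ and $[\mathbf{A}_\mathrm{g},\mathbf{b}] = \mathbf{Y}\tilde{\mathbf{X}}^\dag$ deliver genuine minimizers of $J$ and $J_\mathrm{g}$, respectively (they are the minimum-Frobenius-norm least squares solutions). Thus
\begin{equation}
J_\mathrm{g}(\mathbf{A}_\mathrm{g},\mathbf{b}) \;=\; \min_{\hat{\mathbf{A}},\hat{\mathbf{b}}} J_\mathrm{g}(\hat{\mathbf{A}},\hat{\mathbf{b}}) \;\leq\; \min_{\hat{\mathbf{A}}} J_\mathrm{g}(\hat{\mathbf{A}},\mathbf{0}) \;=\; \min_{\hat{\mathbf{A}}} J(\hat{\mathbf{A}}) \;=\; J(\mathbf{A}),
\end{equation}
which is exactly~\eqref{3-1}.

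There really is no substantive obstacle: the theorem is a nested-minimization inequality whose content is that enlarging the feasible set cannot increase the optimal value. The only place where one must be mildly careful is in justifying that the pseudoinverse-based formulas~\eqref{2-9} and~\eqref{2-13} actually attain the respective minima (and not merely a stationary point or a regularized approximation), but this is standard and can be dispatched in a single sentence by appealing to the well-known variational characterization of the Moore--Penrose inverse.
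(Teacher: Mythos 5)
Your proposal is correct and follows essentially the same nested-minimization argument as the paper: gDMD minimizes over a strictly larger parameter set, restricting to $\hat{\mathbf b}=\mathbf 0$ recovers the standard DMD objective, and hence the gDMD optimum cannot exceed the DMD one. If anything, your direct evaluation of $J_\mathrm{g}(\hat{\mathbf A},\mathbf 0)$ is cleaner than the paper's detour through a pointwise ``triangle inequality'' bound $\|\mathbf y^k-\hat{\mathbf A}\mathbf x^k-\hat{\mathbf b}\|^2\leq\|\mathbf y^k-\hat{\mathbf A}\mathbf x^k\|^2+\|\hat{\mathbf b}\|^2$ (which is not valid in general for squared norms), and your remark that the Moore--Penrose formulas genuinely attain the respective minima covers the one hypothesis both arguments rely on.
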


\begin{proof}
The optimization problem~\eqref{2-11} gives rise to
\begin{equation}\label{3-2}
\begin{aligned}
&\frac{1}{M}\sum_{k=1}^M\|\mathbf y^{k}-\mathbf A_\mathrm{g} \mathbf x^k-\mathbf b\|^2\\
=&\min_{\hat {\mathbf A}\in \mathbb R^{N\times N},\hat{\mathbf b}\in \mathbb R^N}\frac{1}{M}\sum_{k=1}^M\|\mathbf y^{k}-\hat {\mathbf A}\mathbf x^{k}-\hat{\mathbf b}\|^2\\
\leq& \min_{\hat {\mathbf A}\in \mathbb R^{N\times N},\hat{\mathbf b}\in \mathbb R^N}\frac{1}{M}\sum_{k=1}^M\left(\|\mathbf y^{k}-\hat {\mathbf A}\mathbf x^{k}\|^2+\|\hat{\mathbf b}\|^2\right)\\
=& \min_{\hat {\mathbf A}\in \mathbb R^{N\times N}}\left(\frac{1}{M}\sum_{k=1}^M\|\mathbf y^{k}-\hat {\mathbf A}\mathbf x^{k}\|^2\right)+\min_{\hat{\mathbf b}\in \mathbb R^N}\|\hat{\mathbf b}\|^2\\
=&\frac{1}{M}\sum_{k=1}^M\|\mathbf y^{k}-\mathbf A\mathbf x^k\|^2+\min_{\hat{\mathbf b}\in \mathbb R^N}\|\hat{\mathbf b}\|^2.
\end{aligned}
\end{equation}
The inequality is derived by triangle inequality and the last equality is achieved by~\eqref{2-7}. Since the equality is achieved with $\hat {\bold b} = 0$,  gDMD is equivalent to the standard DMD only when the bias term $\bold b = 0$.
\end{proof}

\begin{remark}
Theorem~\ref{thm1} implies that xDMD from section~\ref{sec:xDMD} fits the $M$ snapshots data $\mathbf X$ and $\mathbf Y$ better than rDMD from section~\ref{sec:rDMD} in the least square sense, i.e.,
\begin{equation}\label{3-4}
\frac{1}{M}\sum_{k=1}^M\|\mathbf y^{k}-\mathbf x^k-\mathbf B_\text{g} \mathbf x^k-\mathbf b\|^2\leq \frac{1}{M}\sum_{k=1}^M\|\mathbf y^{k}-\mathbf x^k-\mathbf B\mathbf x^k\|^2.
\end{equation}
\end{remark}

\begin{corollary}\label{cor1}
Let $\mu_M$  be an empirical measure defined on a given dataset $\{\mathbf x^1,\cdots, \mathbf x^M\}$ by 
\begin{equation}\label{3-6}
\mu_M = \frac{1}{M}\sum_{k=1}^M\delta_{\mathbf x^k},
\end{equation}
where $\delta_{\mathbf x^k}$ denotes the Dirac measure at $\mathbf x^k$. Then, for any $\mathbf x\in \mathcal M_{\Delta t}$,
\begin{equation}\label{3-5}
\|\boldsymbol\Phi_{\Delta t}(\mathbf x)-\mathbf A_\mathrm{g} \mathbf x-b\|^2 \le \|\boldsymbol\Phi_{\Delta t}(\mathbf x)-\mathbf A \mathbf x\|^2 \;\;\mathrm{a.s.},
\end{equation}
i.e., the inequality~\eqref{3-5} holds in the sense of distribution.
\end{corollary}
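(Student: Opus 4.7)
The plan is to read the corollary as an integrated/distributional reformulation of Theorem~\ref{thm1}, rather than as a genuine pointwise strengthening of it. First I would unfold the definition of $\mu_M$ as the empirical average of Dirac masses: for any measurable $g:\mathcal{M}_{\Delta t} \to \mathbb{R}$,
\begin{equation*}
\int g(\mathbf{x})\,d\mu_M(\mathbf{x}) = \frac{1}{M}\sum_{k=1}^M g(\mathbf{x}^k).
\end{equation*}
Pairing each side of~\eqref{3-5} against $\mu_M$ thus produces exactly the empirical means that appear on the two sides of~\eqref{3-1}, once the identity $\mathbf{y}^k = \boldsymbol\Phi_{\Delta t}(\mathbf{x}^k;\mathbf{s})$ from~\eqref{eq:y} is substituted.

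Concretely, I would set
\begin{equation*}
g_{\mathrm{g}}(\mathbf{x}) := \|\boldsymbol\Phi_{\Delta t}(\mathbf{x}) - \mathbf{A}_{\mathrm{g}}\mathbf{x} - \mathbf{b}\|^2,\qquad g_{\mathrm{s}}(\mathbf{x}) := \|\boldsymbol\Phi_{\Delta t}(\mathbf{x}) - \mathbf{A}\mathbf{x}\|^2,
\end{equation*}
and verify in one line that $\int g_{\mathrm{g}}\,d\mu_M$ and $\int g_{\mathrm{s}}\,d\mu_M$ coincide, respectively, with the left- and right-hand sides of~\eqref{3-1}. Theorem~\ref{thm1} then immediately yields $\int g_{\mathrm{g}}\,d\mu_M \leq \int g_{\mathrm{s}}\,d\mu_M$, which is the claim~\eqref{3-5} interpreted as an inequality of distributions tested against $\mu_M$.

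The step I expect to be the main obstacle is not computational but interpretive: making the phrase "in the sense of distribution" (and the qualifier "a.s.") precise. Because $\mu_M$ is supported on the finite set $\{\mathbf{x}^1,\ldots,\mathbf{x}^M\}$ with each atom carrying positive mass $1/M$, a literal $\mu_M$-almost-sure pointwise reading of~\eqref{3-5} would require the per-snapshot bound $\|\mathbf{y}^k - \mathbf{A}_{\mathrm{g}}\mathbf{x}^k - \mathbf{b}\|^2 \leq \|\mathbf{y}^k - \mathbf{A}\mathbf{x}^k\|^2$ for every $k$, and this does not follow from the averaged estimate in Theorem~\ref{thm1}. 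I would therefore state upfront that the corollary is to be understood as an inequality of positive measures (equivalently, of the densities $g_{\mathrm{g}}\mu_M$ and $g_{\mathrm{s}}\mu_M$ against nonnegative constant test functions), so that the proof reduces cleanly to rewriting Theorem~\ref{thm1} in measure-theoretic form.
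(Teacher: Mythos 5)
Your proposal follows essentially the same route as the paper: both integrate the two squared-residual functions against the empirical measure $\mu_M$, substitute $\mathbf y^k = \boldsymbol\Phi_{\Delta t}(\mathbf x^k;\mathbf s)$ from~\eqref{eq:y}, and invoke Theorem~\ref{thm1} to obtain $\int g_{\mathrm g}\,\mathrm d\mu_M \le \int g_{\mathrm s}\,\mathrm d\mu_M$, which is exactly the paper's conclusion ``in the sense of distributions.'' Your interpretive caveat is well taken --- since every atom of $\mu_M$ has mass $1/M$, a literal $\mu_M$-a.s.\ pointwise inequality would require per-snapshot bounds that the averaged estimate does not supply --- but this is a clarification of what the statement can mean, not a gap in your argument, and the paper's own proof establishes no more than the integrated inequality you derive.
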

\begin{proof}
The integral of a test function $g$ with respect to $\mu_M$ is given by
\begin{equation}\label{3-7}
\int_{\mathcal M}g(\mathbf x) \text d\mu_M(\mathbf x) = \frac{1}{M}\sum_{k=1}^M g(\mathbf x^k).
\end{equation}
It follows from~\eqref{3-1} and the definition of $\mathbf y^k$ in~\eqref{eq:y} that 
\begin{equation}\label{3-8}
\frac{1}{M}\sum_{k=1}^M\left(\|\boldsymbol\Phi_{\Delta t}(\mathbf x^k)-\mathbf A\mathbf x^k\|^2-\|\boldsymbol\Phi_{\Delta t}(\mathbf x^k)-\mathbf A_\text{g} \mathbf x^k-\mathbf b\|^2\right)  \ge 0.
\end{equation}
Thus, by virtue of~\eqref{3-7},
\begin{equation}\label{3-9}
\int_{\mathcal M} \left( \|\boldsymbol\Phi_{\Delta t}(\mathbf x)-\mathbf A \mathbf x\|^2 -\|\boldsymbol\Phi_{\Delta t}(\mathbf x)-\mathbf A_\text{g} \mathbf x - \mathbf b\|^2 \right) \text d\mu_M(\mathbf x) \ge 0.
\end{equation}
Hence, the inequality~\eqref{3-5} holds in the sense of distributions.
\end{proof}

\begin{remark}
By the same token, 
\begin{equation}\label{3-11}
\|\boldsymbol\Phi_{\Delta t}(\mathbf x)-\mathbf B_\text{g} \mathbf x - \mathbf b\|^2 \le \|\boldsymbol\Phi_{\Delta t}(\mathbf x)-\mathbf B \mathbf x\|^2, \mbox{ a.s.}
\end{equation}
\end{remark}

\begin{theorem}\label{thm2}
Suppose that the assumptions of Lemma~\ref{lemma2-1} hold, and further assume that
\begin{enumerate}
\item $\|\boldsymbol\Phi_{\Delta t}-\mathbf A\mathbf x\|_{L^\infty(\mathcal M_{\Delta t})}<+\infty$ and $\|\boldsymbol\Phi_{\Delta t}-\mathbf A_\mathrm{g} \mathbf x-\mathbf b\|_{L^\infty(\mathcal M_{\Delta t})}<+\infty$;
\item $\mathbf x^k,\mathbf y^k \in \mathcal M_{\Delta t}$ for $k = 1,\ldots,M$.
\end{enumerate}
Let $\mathbf u_\mathrm{DMD}^n$ and $\mathbf u_\mathrm{gDMD}^n$ denote solutions, at time $t^n \equiv t_0+n\Delta t$, of the DMD and gDMD models, respectively. Let $\mathbf u^n$ denote the true solution at time $t^n$, induced by the flow map $\boldsymbol\Phi_{\Delta t}$. Then errors of the DMD and gDMD models at time $t^n$,
\begin{equation}
\mathcal E_\mathrm{DMD}^n = \|\mathbf u^n- \mathbf {u}_\mathrm{DMD}^n\|^2 \qquad\text{and}\qquad 
\mathcal E^n_\mathrm{gDMD} = \|\mathbf u^n-\bold{u}_\mathrm{gDMD}^n\|^2,
\end{equation}
satisfy inequalities
\begin{equation}
\begin{aligned}
&\mathcal E_\mathrm{DMD}^n \leq (1 + \mathrm{e}^{L\Delta t })^n\mathcal E_\mathrm{DMD}^0 +\|\boldsymbol\Phi_{\Delta t} -\mathbf A\|_{L^\infty(\mathcal M)}\frac{(1+e^{L\Delta t})^n-1}{\mathrm{e}^{L\Delta t}},\\
&\mathcal E_\mathrm{gDMD}^n \leq (1 + \mathrm{e}^{L\Delta t })^n \mathcal E_\mathrm{gDMD}^0 +\|\boldsymbol\Phi_{\Delta t} -\mathbf A_\mathrm{g}-\mathbf b\|_{L^\infty(\mathcal M)}\frac{(1+\mathrm{e}^{L\Delta t})^n-1}{\mathrm{e}^{L\Delta t}}.
\end{aligned}
\end{equation}
\end{theorem}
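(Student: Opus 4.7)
The plan is to derive a one-step error recursion for each surrogate and then close it by a discrete Gronwall-type argument. First I would fix $n$, insert an intermediate term, and write
\begin{equation*}
\mathbf u^{n+1}-\mathbf u_\mathrm{DMD}^{n+1} = \bigl(\boldsymbol\Phi_{\Delta t}(\mathbf u^n)-\boldsymbol\Phi_{\Delta t}(\mathbf u_\mathrm{DMD}^n)\bigr) + \bigl(\boldsymbol\Phi_{\Delta t}(\mathbf u_\mathrm{DMD}^n)-\mathbf A\mathbf u_\mathrm{DMD}^n\bigr).
\end{equation*}
The triangle inequality combined with Lemma~\ref{lemma2-1} controls the first summand by $\mathrm{e}^{L\Delta t}\|\mathbf u^n-\mathbf u_\mathrm{DMD}^n\|$, while assumption~1 controls the second by $\|\boldsymbol\Phi_{\Delta t}-\mathbf A\mathbf x\|_{L^\infty(\mathcal M_{\Delta t})}$. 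This yields a one-step bound on the (unsquared) error of the form $\mathrm{e}^{L\Delta t}\|\mathbf u^n-\mathbf u_\mathrm{DMD}^n\|+\|\boldsymbol\Phi_{\Delta t}-\mathbf A\mathbf x\|_{L^\infty}$.

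Next I would square this one-step inequality using a weighted version of the elementary bound $(a+b)^2\leq(1+\alpha)a^2+(1+1/\alpha)b^2$, with $\alpha$ chosen so that the coefficient in front of $\mathcal E_\mathrm{DMD}^n$ becomes $1+\mathrm{e}^{L\Delta t}$, exactly as advertised in the statement. What remains is a linear recursion
\begin{equation*}
\mathcal E_\mathrm{DMD}^{n+1} \leq (1+\mathrm{e}^{L\Delta t})\,\mathcal E_\mathrm{DMD}^n + C,
\end{equation*}
with $C$ controlled by $\|\boldsymbol\Phi_{\Delta t}-\mathbf A\mathbf x\|_{L^\infty(\mathcal M_{\Delta t})}$. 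Iterating this recursion and summing the resulting finite geometric series (with ratio $1+\mathrm{e}^{L\Delta t}$, hence denominator $\mathrm{e}^{L\Delta t}$) immediately produces the closed-form bound stated in the theorem. For the gDMD estimate I would repeat the argument verbatim, replacing $\mathbf A\mathbf u_\mathrm{DMD}^n$ by $\mathbf A_\mathrm{g}\mathbf u_\mathrm{gDMD}^n+\mathbf b$ and invoking the second half of assumption~1 in place of the first.

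The step I expect to be the main obstacle is the $L^\infty$ control of the approximation residual. Assumption~1 only provides finiteness of the supremum on $\mathcal M_{\Delta t}$, yet the surrogate iterate $\mathbf u_\mathrm{DMD}^n$ produced by rolling out the linear map need not remain on this manifold; so either a short invariance or inclusion argument is needed to evaluate the residual along the entire discrete trajectory, or the norm must be enlarged to a set known to contain the orbit (which is presumably the role played by the $\mathcal M$ appearing, slightly inconsistently, in the stated bound). A secondary and purely algebraic obstacle is choosing the weight $\alpha$ in the squared triangle inequality so that the prefactors match exactly $(1+\mathrm{e}^{L\Delta t})^n$ and $[(1+\mathrm{e}^{L\Delta t})^n-1]/\mathrm{e}^{L\Delta t}$, since several natural choices give recursions of the same flavor but with different constants.
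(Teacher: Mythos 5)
Your proposal follows essentially the same route as the paper's own proof (which is modeled on Theorem 4.3 of its reference \cite{qin2019data}): insert the intermediate term $\boldsymbol\Phi_{\Delta t}(\mathbf u_\mathrm{DMD}^{n-1})$, bound one piece by the Lipschitz estimate of Lemma~\ref{lemma2-1} and the other by the $L^\infty$ residual bound of assumption~1, then iterate the resulting recursion $\mathcal E^{n}\leq(1+\mathrm{e}^{L\Delta t})\mathcal E^{n-1}+C$ and sum the geometric series, with the gDMD case handled verbatim. The two obstacles you flag --- that the surrogate iterates need not stay in $\mathcal M_{\Delta t}$, and that a careful weighted squaring does not automatically reproduce the exact prefactors --- are genuine, but the paper's proof simply passes over both (its displayed chain mixes squared and unsquared norms and applies $\mathrm{e}^{L\Delta t}$ directly to squared quantities), so your version is, if anything, the more careful rendering of the same argument.
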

\begin{proof}
The proof follows similar derivations as Theorem 4.3 in~\cite{qin2019data} using triangle inequality: 
\begin{equation}
\begin{aligned}
\mathcal E_\mathrm{DMD}^n & = \|\mathbf u^{n-1} +\boldsymbol\Phi_{\Delta t}(\mathbf u^{n-1})-\mathbf {u}_\mathrm{DMD}^{n-1} -\mathbf B\mathbf {u}_\mathrm{DMD}^{n-1}\|^2\\
&\leq \|\mathbf u^{n-1}-\bold{u}_\mathrm{DMD}^{n-1}\|+\|\boldsymbol\Phi_{\Delta t}(\mathbf u^{n-1}) -\mathbf B \mathbf {u}_\mathrm{DMD}^{n-1}\|^2\\
&\leq \|\mathbf u^{n-1}-\bold{u}_\mathrm{DMD}^{n-1}\|+\|\boldsymbol\Phi_{\Delta t}(\mathbf u_\mathrm{DMD}^{n-1}) -\mathbf B \mathbf {u}_\mathrm{DMD}^{n-1}\|^2+\|\boldsymbol\Phi_{\Delta t}(\mathbf {u}_\mathrm{DMD}^{n-1})-\boldsymbol\Phi_{\Delta t}(\mathbf u^{n-1})\|^2\\
&=\|\mathbf u^{n-1}-\bold{u}_\mathrm{DMD}^{n-1}\|+\|\boldsymbol\Phi_{\Delta t}(\mathbf u_\mathrm{DMD}^{n-1}) -\mathbf A \mathbf {u}_\mathrm{DMD}^{n-1}\|^2+\|\boldsymbol\Phi_{\Delta t}(\mathbf {u}_\mathrm{DMD}^{n-1})-\boldsymbol\Phi_{\Delta t}(\mathbf u^{n-1})\|^2\\
&\leq\|\mathbf u^{n-1}-\bold{u}_\mathrm{DMD}^{n-1}\|+\|\boldsymbol\Phi_{\Delta t}-\mathbf A\|_{L^\infty(\mathcal M_{\Delta t})}^2 +\mathrm{e}^{L\Delta t} \|\mathbf {u}_\mathrm{DMD}^{n-1}-\mathbf u^{n-1}\|^2\\
&=(1+\mathrm{e}^{L\Delta t})\mathcal E_\mathrm{DMD}^{n-1}+\|\boldsymbol\Phi_{\Delta t}-\mathbf A\|_{L^\infty(\mathcal M_{\Delta t})}^2 \\
&\leq (1+\mathrm{e}^{L\Delta t})\mathcal E_\mathrm{DMD}^{n-2}+\|\boldsymbol\Phi_{\Delta t}-\mathbf A\|_{L^\infty(\mathcal M_{\Delta t})}^2(1+(1+\mathrm{e}^{L\Delta t})) \\
&\leq\cdots\\
&\leq  (1+\mathrm{e}^{L\Delta t})^n\mathcal E_\mathrm{DMD}^{0}+\|\boldsymbol\Phi_{\Delta t}-\mathbf A\|_{L^\infty(\mathcal M_{\Delta t})}^2\sum_{k=0}^{n-1}(1+\mathrm{e}^{L\Delta t})^k \\
\end{aligned}
\end{equation}
A proof for the error bound for $\mathcal E_\mathrm{gDMD}^n$ is similar.
\end{proof}

\begin{remark}
The above error estimates indicate that gDMD has a tighter error bound than DMD a.s. because Corollary~\ref{cor1} indicates $\|\boldsymbol\Phi_{\Delta t} -\mathbf A_\text{g} -\mathbf b\|_{L^\infty(\mathcal M_{\Delta t})}^2\leq  \|\boldsymbol\Phi_{\Delta t} -\mathbf A\|_{L^\infty(\mathcal M_{\Delta t})}^2$ a.s..
\end{remark}

\begin{remark}
Similarly, xDMD has a tighter error bound than rDMD a.s.
\end{remark}

\section{Numerical Experiments}
\label{sec4}

We use a series of numerical experiments to demonstrate that xDMD outperforms other DMD variants and to validate our error estimates.\footnote{Additional numerical experiments are reported in the Supplemental Material.}  
Snapshots (training data) are obtained from reference solutions during time $[0,T]$, with input-output time-lag $\Delta t$, i.e.,
\begin{equation}
\mathbf X = \begin{bmatrix}
\mid&\mid&&\mid\\
\mathbf u^0&\mathbf u^1&\cdots&\mathbf u^{M}\\
\mid&\mid&&\mid
\end{bmatrix}, \qquad 
\mathbf Y = \begin{bmatrix}
\mid&\mid&&\mid\\
\mathbf u^1&\mathbf u^2&\cdots&\mathbf u^{M+1}\\
\mid&\mid&&\mid
\end{bmatrix}, 
\qquad  
T = (M+1)\Delta t.
\end{equation}
These datasets are assumed to be sufficiently large and rich to satisfy Lemma~\ref{lemma2-1}. 
We construct DMD and xDMD (and the other intermediate variants) by finding the best fit $\mathbf A$ or $(\mathbf B_\text{g}, \mathbf b)$, which yields a set of linear approximation models for the $\Delta t$ time-lag  input and output.  The ability of learning the unknown dynamics is tested in terms of 
\begin{itemize}
\item \emph{Representation}: Compare the difference between $\mathbf u^k$ and $\mathbf A^k\mathbf u^0$, or between $\mathbf u^k$ and $(\mathbf I+\mathbf B_\text{g})^k\mathbf u^0+\sum_{i = 0}^{k-1} (\mathbf I+\mathbf B_\text{g})^k\mathbf b$, for $k = 1,\ldots,M+1$. The error is essentially the least square fitting error, aka ``training error'' in machine learning.

\item \emph{Extrapolation}: Draw another set of reference solution $\{\mathbf u^k\}_{k=M+1}^{2(M+1)}$ from time interval $[T, 2T]$ following the same $\Delta t$ time-lag trajectory for the convenience of testing.  Compare the difference between $\mathbf u^k$ and $\mathbf A^k\mathbf u^0$, and between $\mathbf u^k$ and $(\mathbf I+\mathbf B_\text{g})^k\mathbf u^0+\sum_{i = 0}^{k-1} (\mathbf I+\mathbf B_\text{g})^k\mathbf b$, for $k = M+1,\ldots,2(M+1)$.

\item \emph{Interpolation}: Select a random subset of the dataset, i.e.,
\begin{equation}
\mathbf X_s = \begin{bmatrix}
\mid&\mid&&\mid\\
\mathbf u^{s_0}&\mathbf u^{s_1}&\cdots&\mathbf u^{s_m}\\
\mid&\mid&&\mid
\end{bmatrix}, \qquad 
\mathbf Y_s = \begin{bmatrix}
\mid&\mid&&\mid\\
\mathbf u^{s_0+1}&\mathbf u^{s_1+1}&\cdots&\mathbf u^{s_{m}+1}\\
\mid&\mid&&\mid
\end{bmatrix}, 
\end{equation}
where
$s_0 = 0$, $\{ s_1,\ldots, s_m\}\subset\{1,\cdots, M\}$, with $m<M$.
Then determine $\mathbf A$ and $(\mathbf B_\text{g}, \mathbf b)$ based on the selected dataset $\mathbf X_s$ and $\mathbf Y_s$.  Compare the difference between $\mathbf u^k$ and $\mathbf A^k\mathbf u^0$, and between $\mathbf u^k$ and $(\mathbf I+\mathbf B_\text{g})^k\mathbf u^0+\sum_{i = 0}^{k-1} (\mathbf I+\mathbf B_\text{g})^k\mathbf b$, for $k = 1,\ldots,M+1$. In our examples, the selected number of snapshots, $m$, is smaller than $M/2$.

\item \emph{Generalizability}: Determine $\mathbf A$ and $(\mathbf B_\text{g}, \mathbf b)$ from the datasets $\mathbf X$ and $\mathbf Y$, and obtain a linear approximation model of the discretized PDE. Compute another set of reference solutions $\{\mathbf v^k\}_{k=1}^{M+1}$ from a different initial input $\mathbf v^0\neq \mathbf u^0$ and the same boundary condition and source. Compare the difference between $\mathbf v^k$ and $\mathbf A^k\mathbf v^0$, and between $\mathbf v^k$ and $(\mathbf I+\mathbf B_m)^k\mathbf v^0+\sum_{i = 0}^{k-1} (\mathbf I+\mathbf B_m)^k\mathbf b$, for $k = 1,\ldots,M+1$. In our examples, the input $\mathbf v^0$ has completely different features than the training $\mathbf u^0$.

\item \emph{Accuracy}: The accuracy is compared in terms of the log relative errors,
\begin{equation}\label{eq:err}
\varepsilon_\text{DMD}^n := \lg\left(\frac{\|\mathbf u^n-\mathbf u_\text{DMD}^n\|^2}{\|\mathbf u^n\|^2}\right), \qquad
\varepsilon_\text{xDMD}^n := \lg\left(\frac{\|\mathbf u^n-\mathbf u_\text{xDMD}^n\|^2}{\|\mathbf u^n\|^2}\right).
\end{equation}

\end{itemize}
All comparisons between DMD and xDMD are made using the same dataset and the same SVD truncation criteria in the pseudo-inverse part (using the default truncation criteria in Matlab).

\subsection{Inhomogeneous PDEs}
\label{test1}
We start by examining the performance of the aforementioned DMD variants in learning a PDE with inhomogeneous source terms. Consider a one-dimensional diffusion equation with a source and homogeneous boundary conditions,
 \begin{equation}\label{eq:test1}
 \left\{
 \begin{aligned}
& \frac{\partial u}{\partial t} = 0.1 \frac{\partial^2 u}{\partial x^2} + S(x), \qquad x\in (0,1), \quad t>0.1; \\
&u(x,0) = \exp[-20(x-0.5)^2]; \\
&u_x(0,t) = 0, \quad u_x(1,t) = 0.
\end{aligned}\right.
\end{equation}
The reference solution is obtained by an implicit finite-difference scheme with $\Delta x = 0.01$ and $\Delta t = 0.01$.  Training datasets consist of $M = 80$ snapshots collected from $t = 0$ to $t = 0.8$. The extrapolation is tested from $t = 0.8$ to $t = 1.6$. The interpolation training set consists of $m = 20$ snapshots randomly selected from the $M = 80$ snapshots.

The left column of Figure~\ref{fig1} provides a comparison between the reference solution and its DMD and xDMD approximations in the three modes:  representation, extrapolation and interpolation. As predicted by the theory, DMD fails in all three modes. For a fixed time, the DMD error grows with  $x$, which is to be expected since standard DMD algorithms are not designed to handle inhomogeneous PDEs, such as~\eqref{eq:test1} in which the source term is $S(x) = x$. 
If a source term lies outside the span of the training data, as happens in this test, then it cannot be represented as a linear combination of the available snapshots. The DMD model always lies within the span of the training data, while the true solution grows out of that subspace because of the source. On the other hand, the xDMD model captures the true solution in all modes thanks to the bias term that accounts for the solution expansion outside the training data span. 

\begin{figure}[H]
\includegraphics{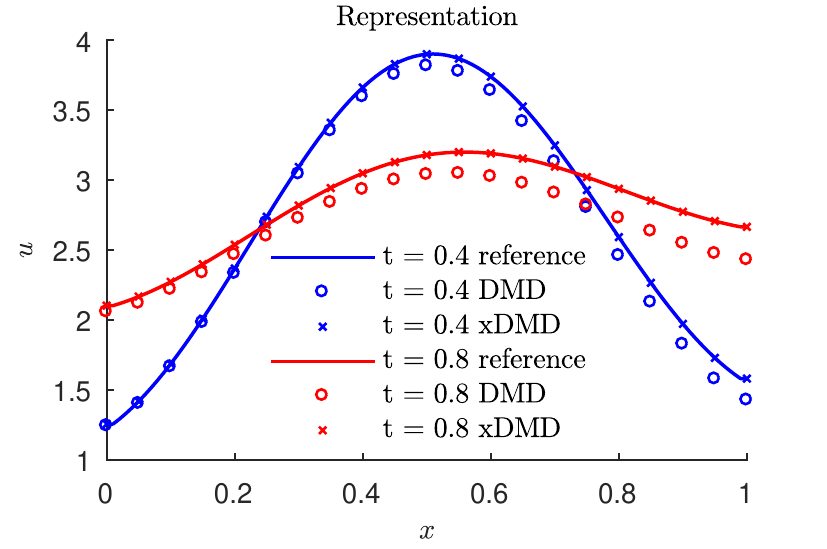}
\includegraphics{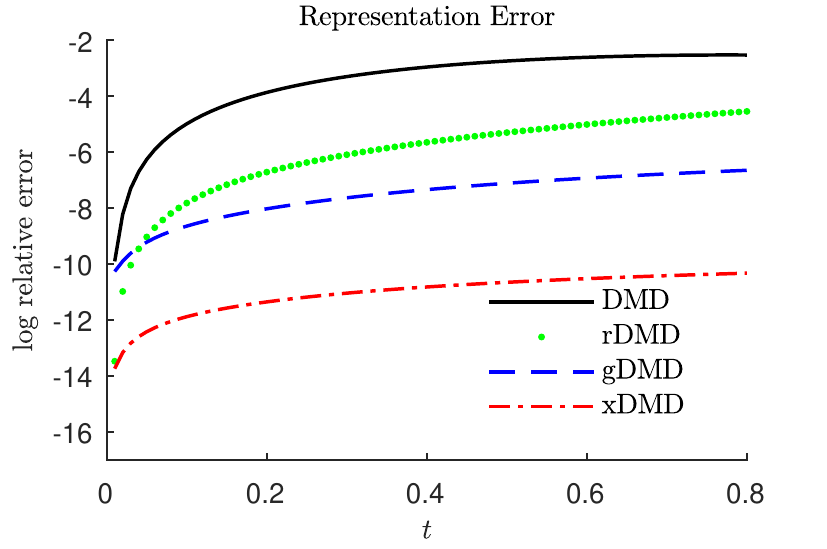}
\includegraphics{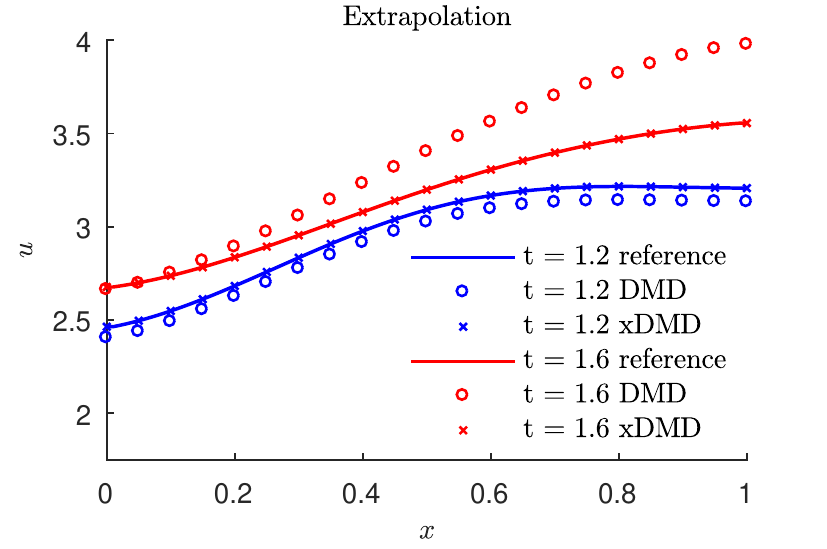}
\includegraphics{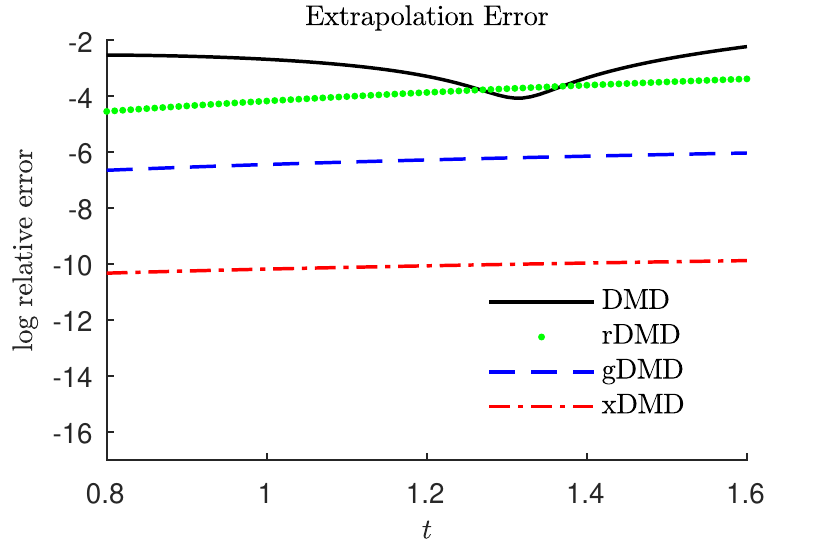}
\includegraphics{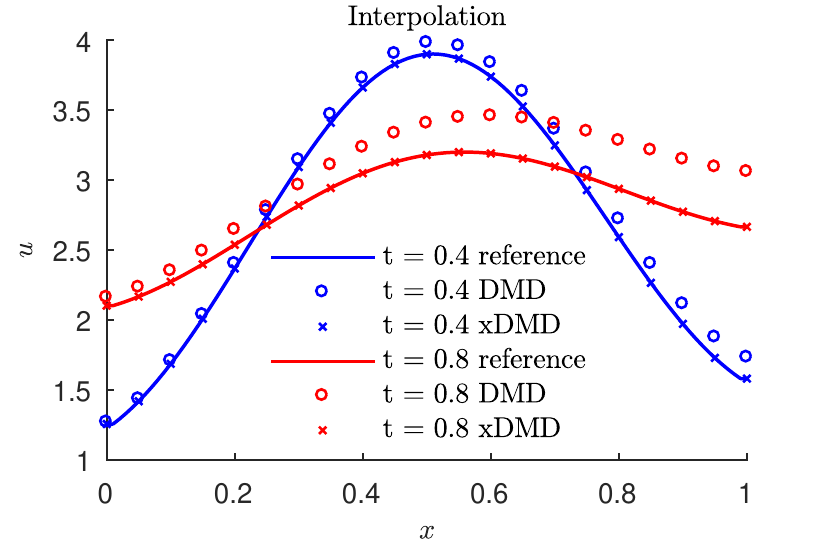}
\includegraphics{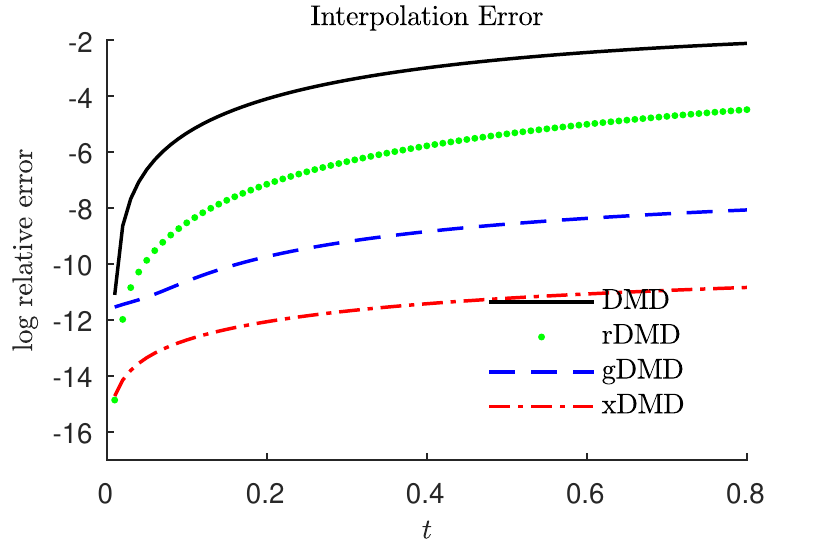}
\caption{Reference solution of~\eqref{eq:test1} and its DMD~\eqref{2-9} and xDMD~\eqref{2-19} approximations (left column), and the log relative error of the DMD~\eqref{2-9}, gDMD~\eqref{2-13}, rDMD~\eqref{2-15}, and xDMD~\eqref{2-19} models (right column).}
\label{fig1}
\end{figure}

The right column of Figure~\ref{fig1} shows the accuracy of the DMD~\eqref{2-9}, gDMD~\eqref{2-13}, rDMD~\eqref{2-15}, and xDMD~\eqref{2-19} models. Although DMD and rDMD are mathematically equivalent, the identity subtraction in rDMD reduces the solution error in all three modes (representation, extrapolation, and interpolation).  Addition of the bias term in xDMD contributes to further orders-of-magnitude reduction in the error, consistent with the theoretical proof in section~\ref{sec3}. In all modes, the proposed xDMD outperforms the other DMD variants by several orders of magnitude, achieving almost machine accuracy.

An added benefit of gDMD and xDMD is their ability to infer a source function, $S(x)$, in an inhomogeneous PDE from temporal snapshots of the solution (Figure~\ref{fig2}). Both methods recover $S(x)$, regardless of whether it is linear ($S = x$) or nonlinear ($S = \text{e}^x$), and have comparable errors. While DMD lamps together the differential operator and the source, gDMD and xDMD treat them separately. This endows them with the ability to learn both the operator (the system itself) and the source (external forces acting on the system), as long as the latter does not vary with time. This self-learning feature carries almost no extra computational cost.

\begin{figure}[H]
\includegraphics{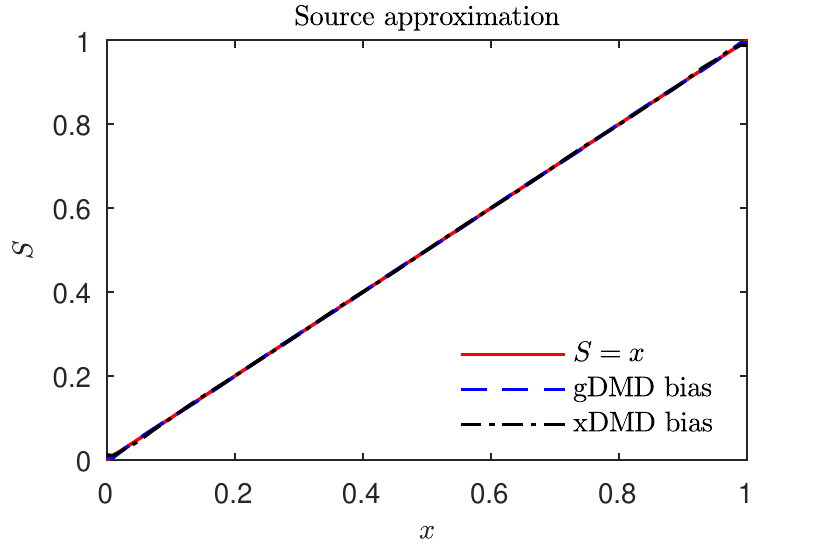}
\includegraphics{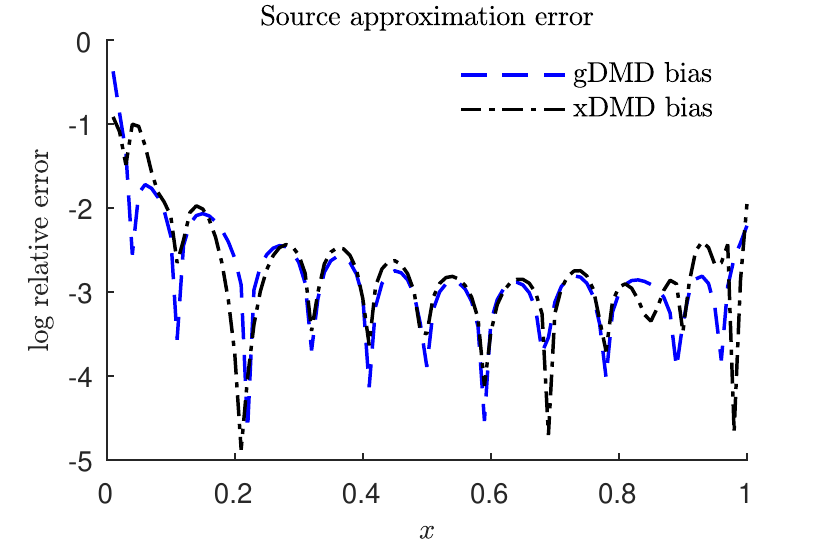}
\includegraphics{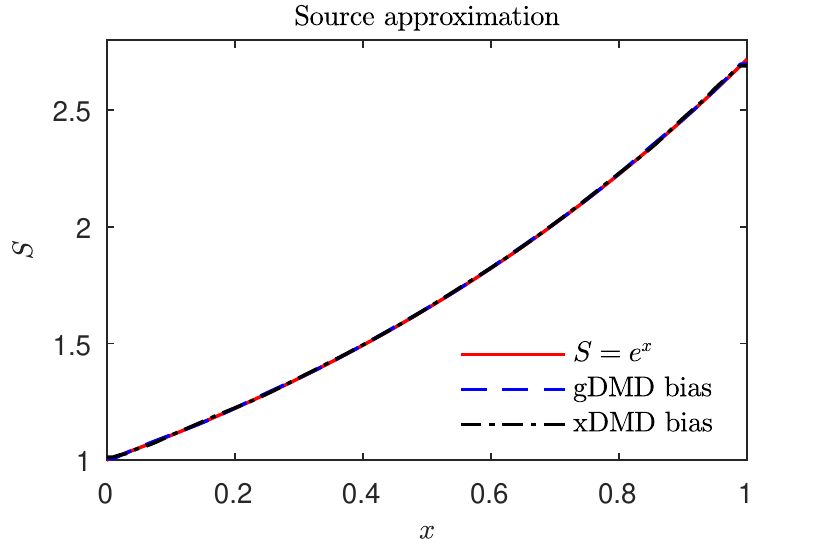}
\includegraphics{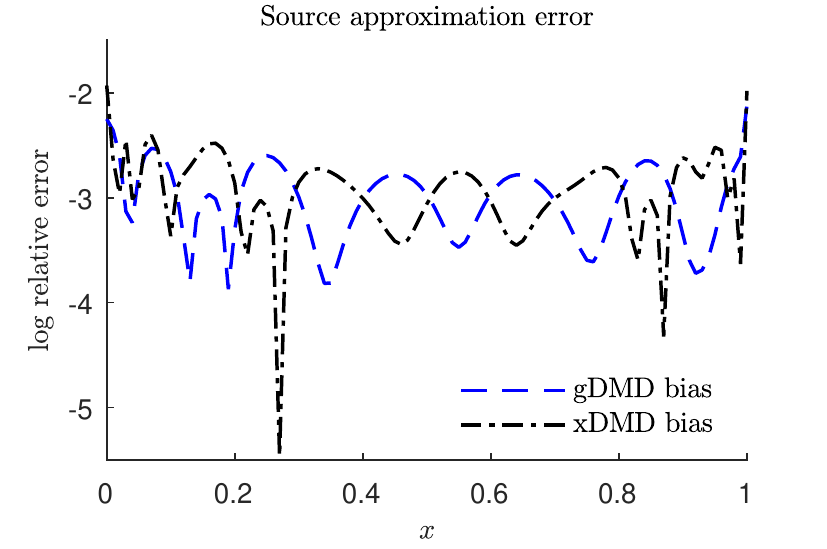}
\caption{Estimation of the source term $S(x) = x$ and $\text{e}^x$ in~\eqref{eq:test1} by gDMD and xDMD: the eyeball measure (left) and the log relative error (right).}
\label{fig2}
\end{figure}

\subsection{Inhomogeneous Boundary Conditions and Data Errors}
\label{test3}

Next, we examine the ability of DMD and xDMD to handle inhomogeneous boundary conditions and data errors. Consider a two-dimensional diffusion equation in a multi-connected domain $\mathcal D$ with inhomogeneous boundary conditions,
\begin{equation}\label{eq:test2}
\left\{
\begin{aligned}
&\frac{\partial u}{\partial t} =  \nabla^2 u,  \qquad (x,y) \in \mathcal D, \quad t\in (0,10000];\\
 &u(x,y,0) = 0;\\
 & u(0,y,t) =  3, \quad u(800,y,t) = 1, \quad \frac{\partial u}{\partial y}(x,0,t) = \frac{\partial u}{\partial y}(x,800,t) = 0, \quad 
  u(x,y,t) =  2 \;\;\;  \mbox{on $\partial \mathcal S$ (red)}.\\
 \end{aligned}\right.
 \end{equation}
The domain $\mathcal D$ is the $800 \times 800$ square with an S-shaped cavity (Figure~\ref{fig4}). The Dirichlet boundary conditions are imposed on the left and right sides of the square and the cavity surface. The top and bottom of the square are impermeable. The reference solution is obtained via  Matlab PDE toolbox on the finite-element mesh with $1633$ elements shown in Figure~\ref{fig4}. The solution from early transient time ($t = 2000$) until steady state ($t = 10000$) is presented in Figure~\ref{fig5}.

\begin{figure}[H]
\begin{center}
\includegraphics[trim={0 0.1cm 6cm 0},clip]{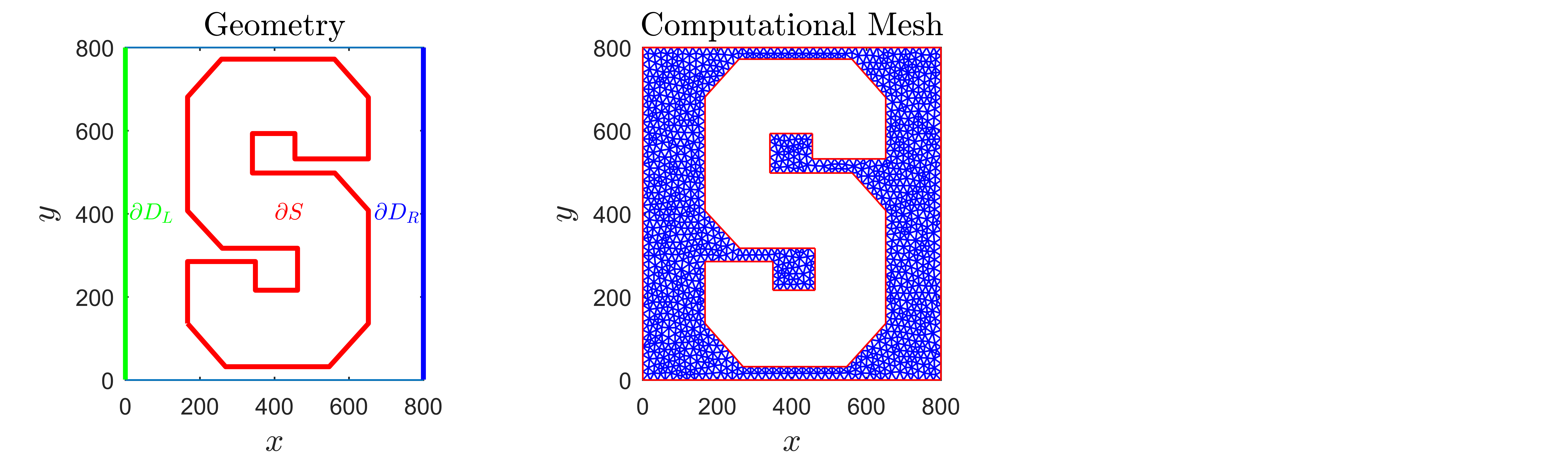}
\end{center}
\caption{Multi-connected simulation domain $\mathcal D$ (left) and the mesh used in the finite-element solution of~\eqref{eq:test2}.}
\label{fig4}
\end{figure}

\begin{figure}[H]
\includegraphics{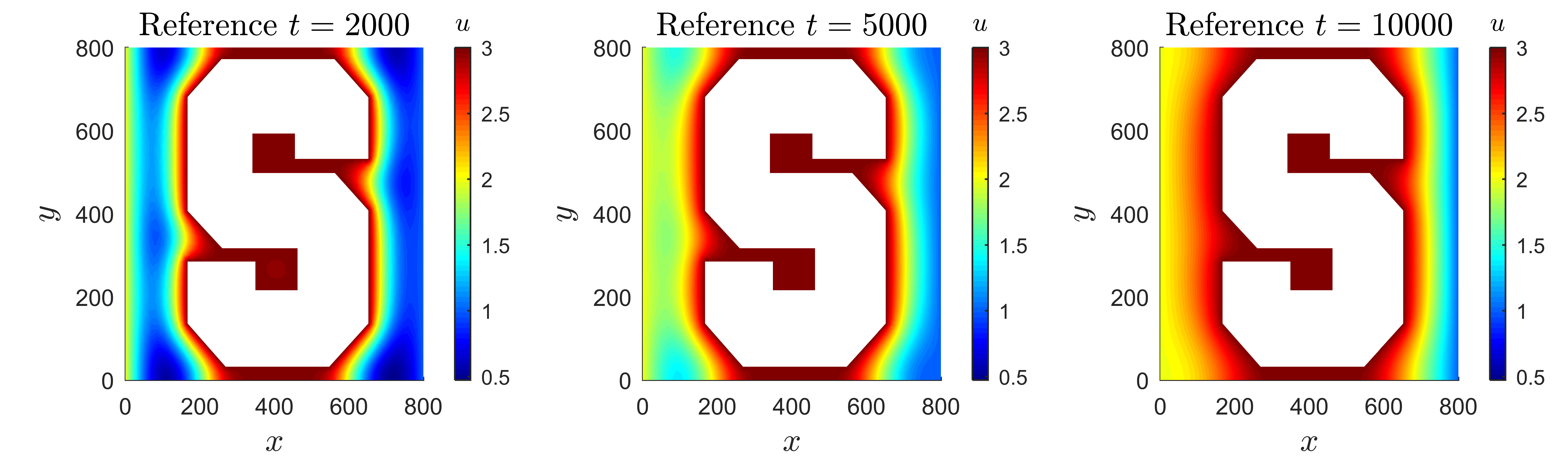}
\caption{The reference solution of~\eqref{eq:test2}, $u(x,y,t)$ at times $t = 2000$, $t = 5000$ and $t=10000$.}
\label{fig5}
\end{figure}

With the total simulation time (time sufficient to reach steady state) $t = 10000$, we generate snapshots spaced by $\Delta t = 5$ and use those to conduct four tests. First, the leading $M = 1200$ snapshots are used to inform DMD and xDMD and to ascertain their representation errors. Second, the DMD and xDMD models are deployed to extrapolate until $t = 10000$ and compare the extrapolation error of the two models. Third, randomly selected $M = 600$ snapshots from the first $1200$ snapshots are used for interpolation and to compare the interpolation error of DMD and xDMD. Finally, we repeat these representation/extrapolation/interpolation tests on data corrupted by addition of zero-mean white noise whose strengths at any $(x,t)$ is $0.1\%$ of the nominal value of $u(x,t)$ at that point.

\begin{figure}[H]
\begin{center}
\includegraphics{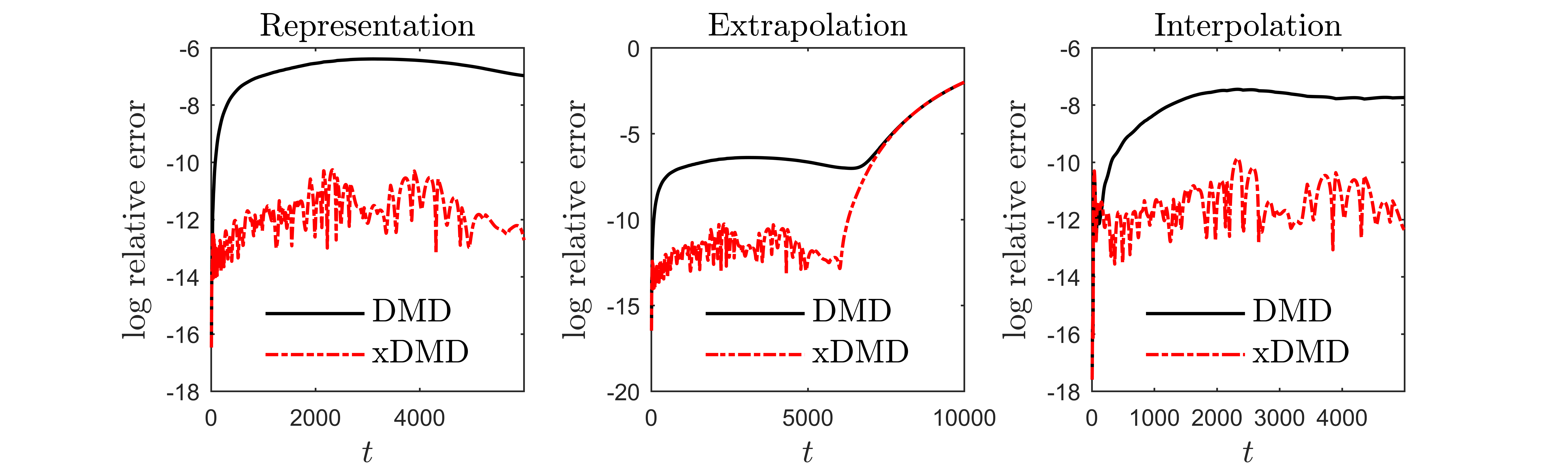}
\includegraphics{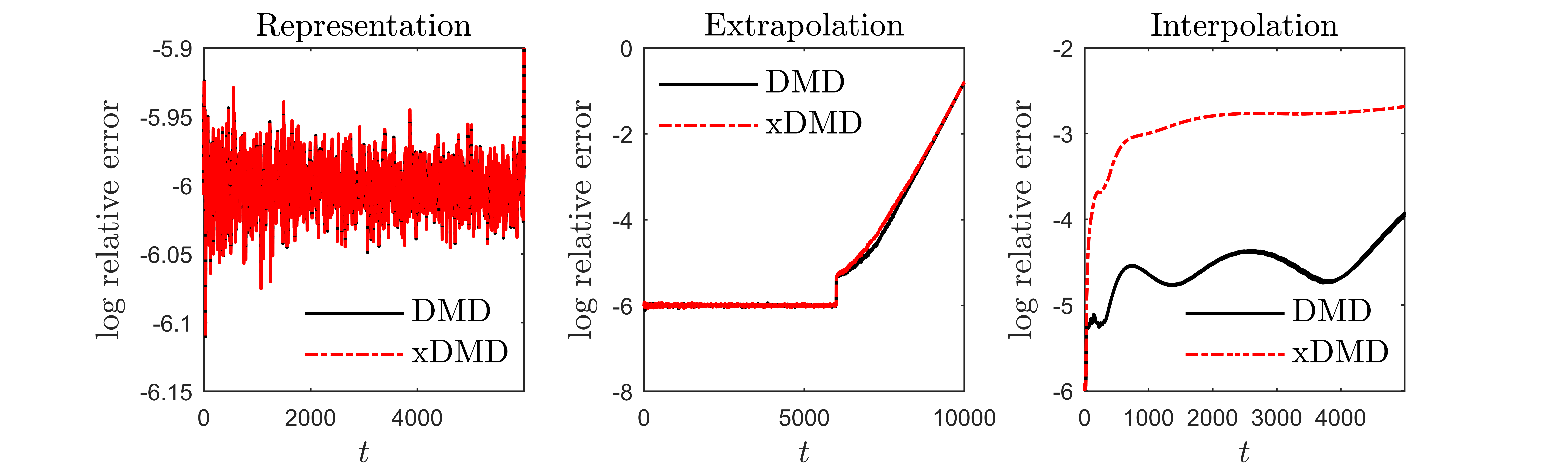}
\end{center}
\caption{Dependence of the log relative error of the DMD and xDMD models on time in the representation, extrapolation and interpolation modes. These errors are reported for noiseless data (top row) and data corrupted by addition of zero-mean white noise whose strengths at any $(x,t)$ is $0.1\%$ of the nominal value of $u(x,t)$ at that point (bottom row).}
\label{fig6}
\end{figure}

Figure~\ref{fig6} reveals that, for noiseless data, the accuracy of xDMD is orders of magnitude higher than that of DMD in the representation and interpolation modes; in the extrapolation mode, the error is dominated by the extrapolation error, which increases with time, but xDMD is still about $9\%$ more accurate than DMD at later times. However, in the presence of measurement noise, xDMD has no better performance than DMD, it is even less accurate in the extrapolation and interpolation regimes. This sensitivity to noise mirrors the over-fitting issue in machine learning: models with more parameters fit the limited number of available data (solution snapshots) too closely and, consequently, fail to fit additional data or to reliably predict future observations. Since xDMD has more parameters than DMD due to the bias term, one should expect the former to be more sensitive to noise than the latter.

\subsection{Coupled Nonlinear PDEs}

Common sense suggests that the success of linear models, such as DMD and xDMD, to approximate nonlinear dynamics is not guaranteed. In machine learning, data augmentation by feature map is widely used to deal with the nonlinearity. Similarly, judiciously chosen observables play a crucial role in the success of data-driven (DMD) modeling~\cite{korda2018convergence, lu2020lagrangian, mezic2013analysis}. The selection of observables requires prior knowledge of the underlying process, which is out of scope of this study. Instead, we assume no prior knowledge and apply no data augmentation, i.e., our observables are the state itself. To satisfy the assumptions in Lemma~\ref{lemma2-1}, we restrict our attention to nonlinear PDEs, whose solutions are confined in certain subspace $\mathcal M$. Our numerical experiments deal with the two-dimensional viscous Burgers equation (reported in the Supplemental Material) and the two-dimensional Navier-Stokes equations. The goal of these tests is to assess the ability of DMD and xDMD to learn complex flow maps.

We consider two-dimensional flow of an incompressible fluid with density $\rho = 1$ and dynamic viscosity $\nu = 1/600$ (these and other quantities are reported in consistent units) around an impermeable circle of diameter $D = 0.1$. The flow, which takes place inside a rectangular domain $\mathcal D = \{ \mathbf x = (x,y)^\top : (x,y) \in [0,2] \times [0,1] \}$, is driven by an externally imposed pressure gradient; the center of the circular inclusion is $\mathbf x_\text{circ} = (0.3,0.5)^\top$. Dynamics of the three state variables, flow velocity $\mathbf u(\mathbf x,t) = (u,v)^\top$ and fluid pressure $p(\mathbf x,t)$, is described by the two-dimensional Navier-Stokes equations,
\begin{equation}\label{eq:NS}
\left\{
\begin{aligned}
&\frac{\partial u}{\partial x}+\frac{\partial v}{\partial y} = 0; \\
&\frac{\partial u}{\partial t}+u\frac{\partial u}{\partial x}+v\frac{\partial u}{\partial y} = -\frac{1}{\rho}\frac{ \partial p}{ \partial x}+\nu\left(\frac{\partial^2 u}{\partial x^2}+\frac{\partial^2 u}{\partial y^2}\right), \qquad \mathbf x \in \mathcal D, \quad t > 0; \\
&\frac{\partial v}{\partial t}+u\frac{\partial v}{\partial x}+v\frac{\partial v}{\partial y} = -\frac{1}{\rho}\frac{\partial p}{\partial y}+\nu\left(\frac{\partial^2 v}{\partial x^2}+\frac{\partial^2 v}{\partial y^2}\right);
\end{aligned}
\right.
\end{equation}
subject to the initial conditions $\mathbf u(x,y,0) = (0,0)^\top$ and $p(x,y,0) = 0$; and the boundary conditions $\mathbf u(0,y,t) = (1,0)^\top$, 
$p(2,y,t) = 0$; and $\mathbf u(x,0,t) = \mathbf u(x,1,t) = (0,0)^\top$. This combination of parameters results in the Reynolds number $\text{Re} = 1200$.


The reference solution is obtained with the Matlab code~\cite{NScode}, which implements a finite-difference scheme on the staggered grid with $\Delta x = \Delta y = 0.02$ and $\Delta t = 0.0015$. Our observable (quantity of interest) is the magnitude of the flow velocity, $U(x,y,t) = \sqrt{u^2+v^2}$. Visual examination of the solution $U(x,y,t)$ reveals it to be periodic from $t = 7.5$ to $t = 15$ (the simulation horizon),  i.e., the solution is confined in a fixed subspace $\mathcal M$.  We collect $M = 2500$ snapshots of $U$ from $t = 7.5$ to $t = 11.25$ into a training dataset, from which DMD and xDMD learn the nonlinear dynamics. The discrepancy between the reference solution and its fitting with the DMD and xDMD models is the representation error. 

\begin{figure}[H]
\includegraphics{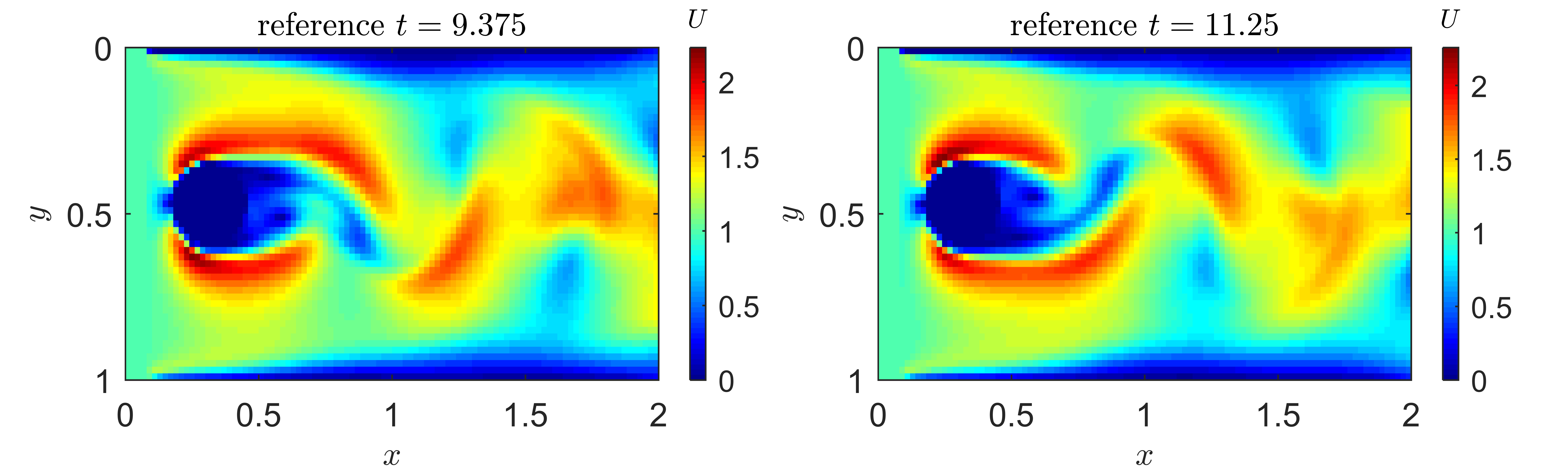}
\includegraphics{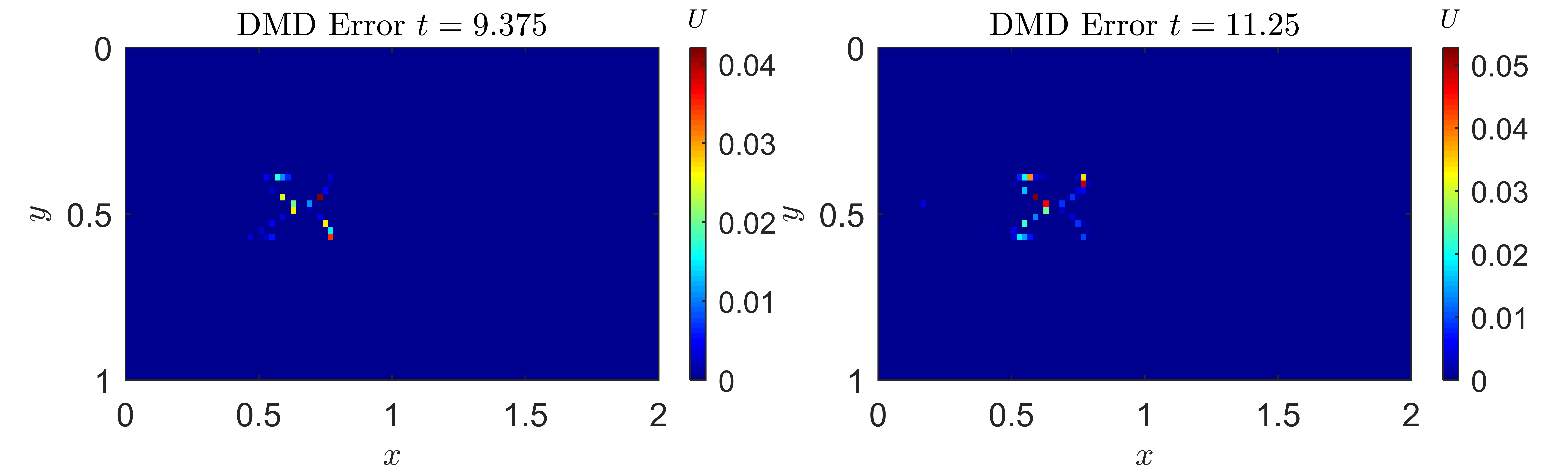}
\includegraphics{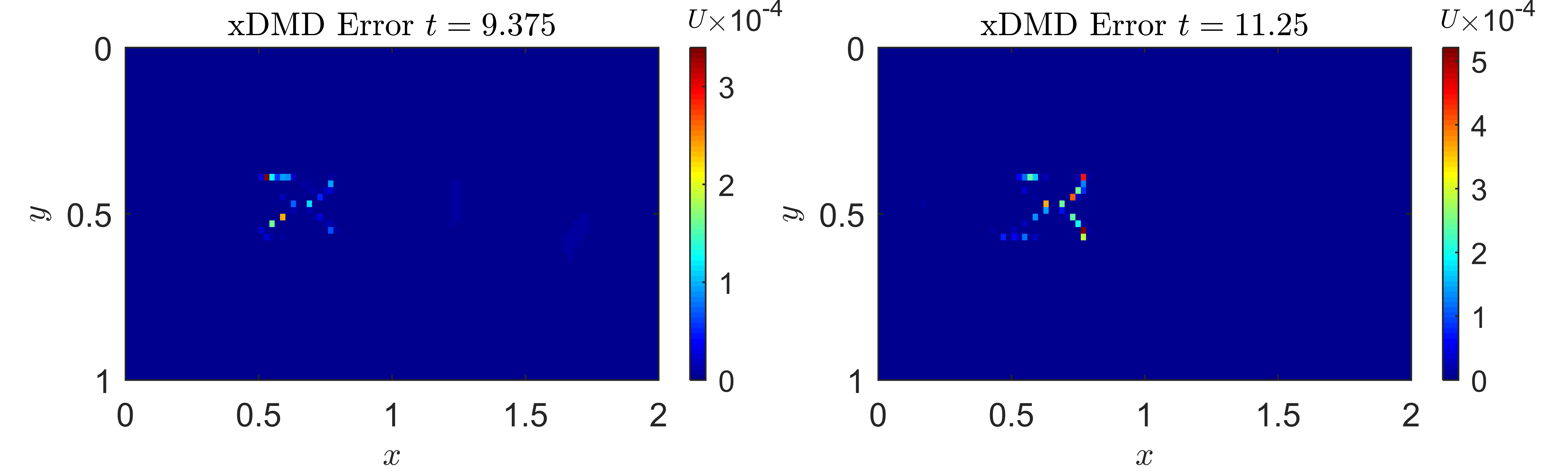}
\caption{Velocity magnitude $U = \sqrt{u^2+v^2}$ of incompressible flow with the Reynolds number $\text{Re} = 1200$ around an impermeable circle predicted by solving numerically the two-dimensional Navier-Stokes equations~\eqref{eq:NS} (top row) and by using the DMD and xDMD models in the representation mode. The representation errors~\eqref{eq:err} for these two approximations are displayed in the second and third rows, respectively. }
\label{fig10}
\end{figure}

The first row of Figure~\ref{fig10} depicts the spatial distribution of the flow speed $U$, at times $t = 9.38$ and $t = 11.25$, computed with the (reference) solution of the Navier-Stokes equations~\eqref{eq:NS}.  Both DMD and xDMD fit the nonlinear flow data using a linear approximation with satisfactory accuracy (the last two rows of Fig.~\ref{fig10}). The errors are confined to the circle's wake, with xDMD being two orders of magnitude more accurate than DMD.

\begin{figure}[H]
\includegraphics{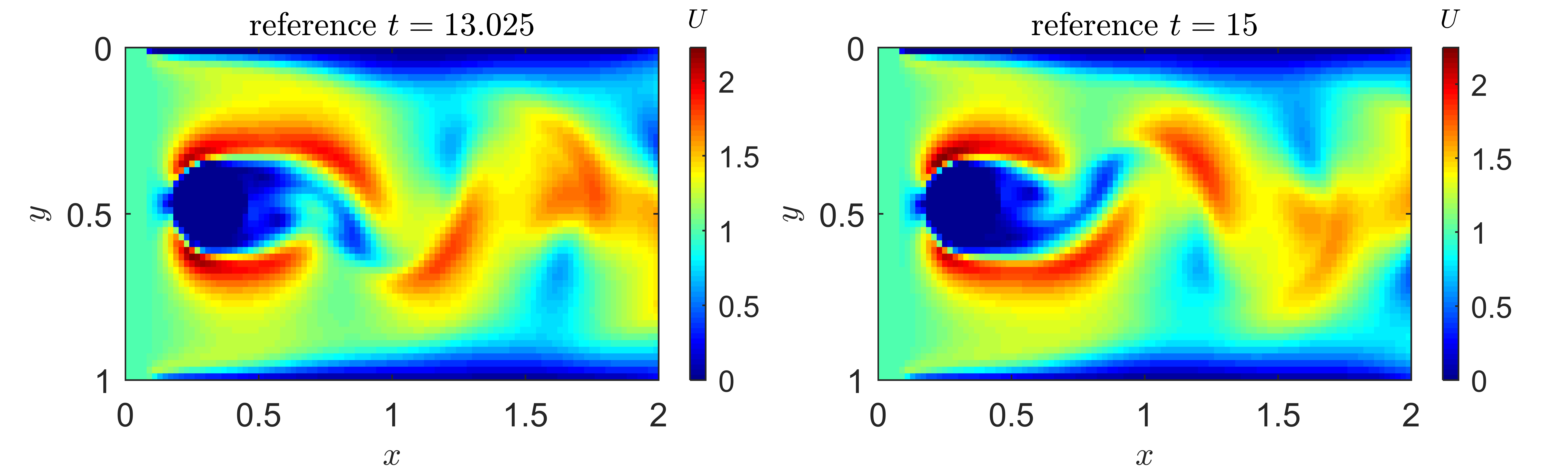}
\includegraphics{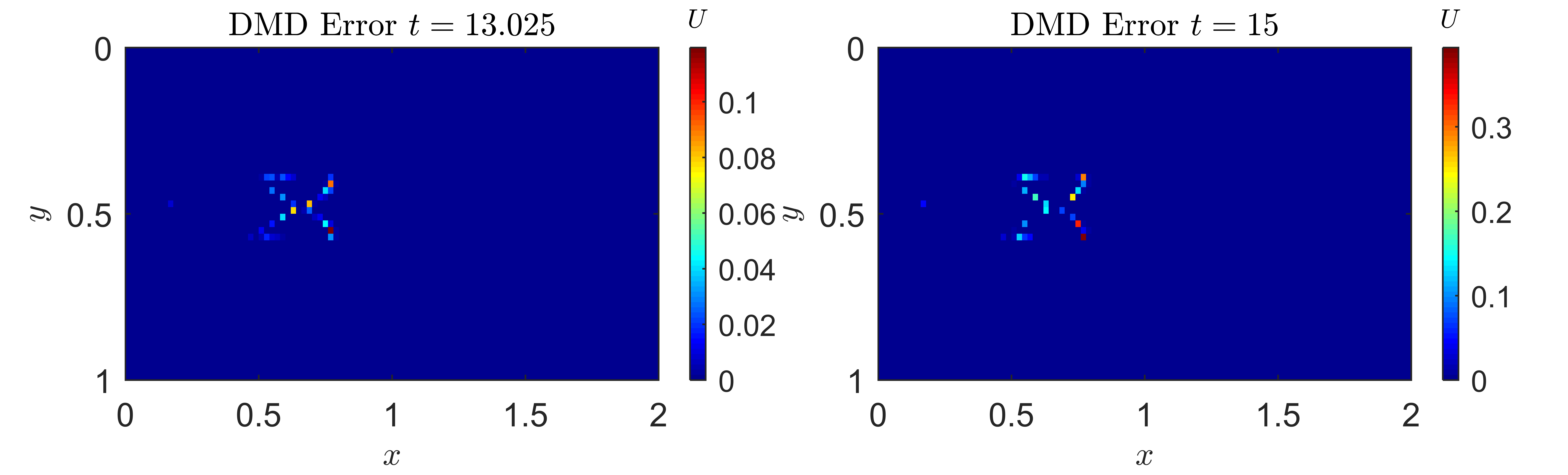}
\includegraphics{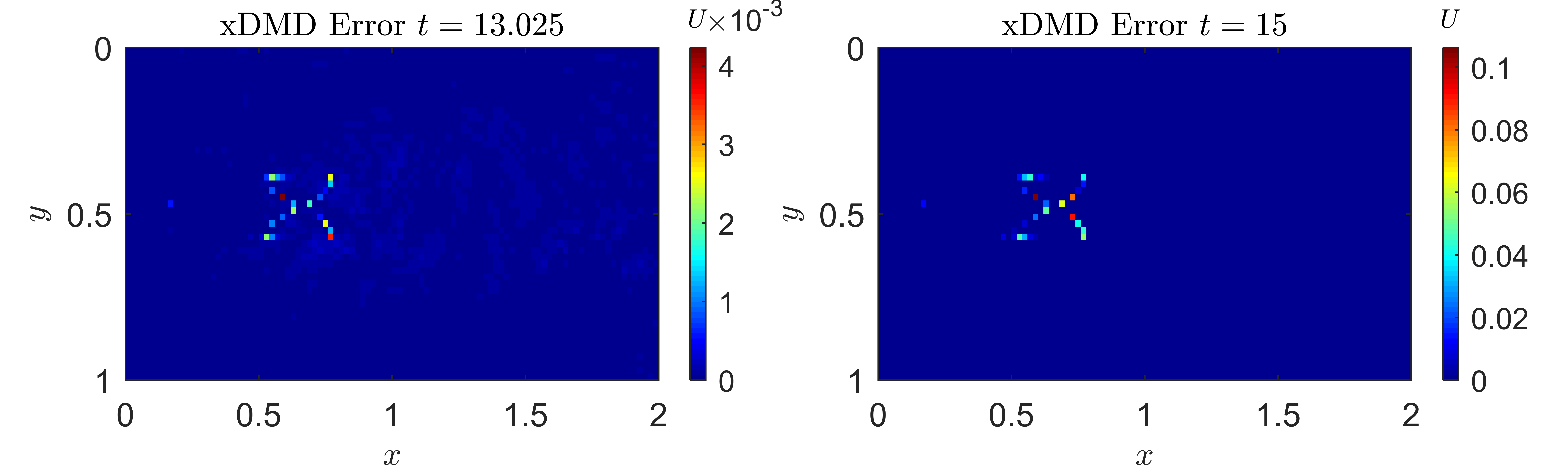}
\caption{Velocity magnitude $U = \sqrt{u^2+v^2}$ of incompressible flow with the Reynolds number $\text{Re} = 1200$ around an impermeable circle predicted by solving numerically the two-dimensional Navier-Stokes equations~\eqref{eq:NS} (top row) and by using the DMD and xDMD models in the extrapolation mode. The extrapolation errors~\eqref{eq:err} for these two approximations are displayed in the second and third rows, respectively.}
\label{fig11}
\end{figure}

Next, we use the learned DMD and xDMD models in the extrapolation mode, i.e., to predict $U(x,y,t)$ within the time interval from $t = 11.25$ to $t = 15$. As shown in Figure~\ref{fig11}, both DMD and xDMD yield accurate extrapolation, which should be expected due to the periodic behavior of the solution. Although the accuracy in extrapolation is diminished for both methods, xDMD remains more accurate than DMD at different extrapolation times.

\begin{figure}[H]
\includegraphics{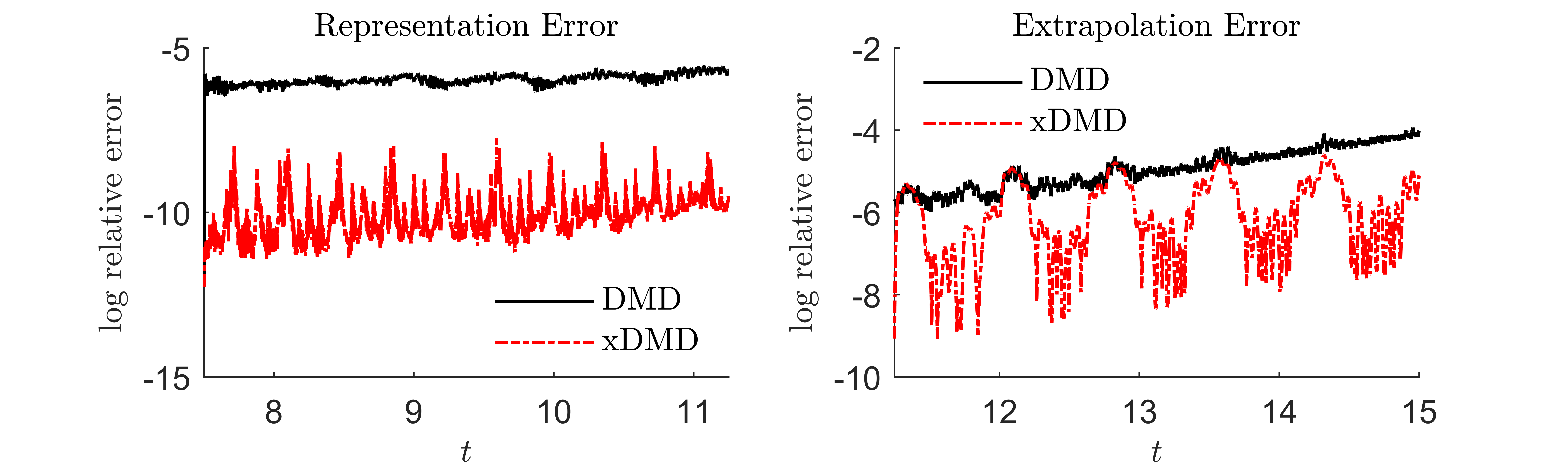} 
\caption{Dependence of log relative error of the DMD and xDMD models on time in the representation and extrapolation modes.}
\label{fig12}
\end{figure}

Finally, Figure~\ref{fig12} exhibits the log relative error of the two methods as function of time. In the representation mode, both DMD and xDMD have nearly steady small fitting error, fluctuating about $10^{-10}$ for xDMD and $10^{-6}$ for DMD. The observation of xDMD's higher accuracy in fitting the data is consistent with Theorem~\ref{thm1}. Similarly, the extrapolation error of DMD and xDMD validates Theorem~\ref{thm2}. Although both extrapolation errors grow slowly, the xDMD error exhibits a periodic pattern, indicating that the xDMD linear model is able to capture the detailed periodic feature of the true flow better. Once an accurate linear representation of the nonlinear flow is available, one can conduct spatiotemporal mode analysis, reduced-order modeling and accelerated simulations. Table~\ref{table1} collates computational times of simulating the reference solution and the linear approximation models. Further reduction in computation cost can be achieved by constructing reduced-order models using eigen-decomposition in DMD and xDMD.

\begin{table}[H]
\begin{center}
\begin{tabular}{|c|c|c|c| } 
 \hline
 &Simulation & DMD& xDMD\\ 
 \hline
Computational time (sec)&$29.0776$&$2.1352$& $2.1654$\\ 
\hline
Relative error & -- &$2.0515\times 10^{-5}$& $3.1193\times 10^{-6}$ \\
 \hline
\end{tabular}
\end{center}
\caption{Computational time and relative error for the reference solution and the DMD and xDMD models.}
\label{table1}
\end{table}

\subsection{Generalizability to New Inputs}\label{sec:generalizability}
Generalizability refers to a model's ability to adapt properly to new, previously unseen data, drawn from the same distribution as the one used to create the model. With validated generalizability, a DMD or xDMD model can be employed as a surrogate to accelerate, e.g., expensive Markov Chain Monte Carlo (MCMC) sampling used in inverse problems. 
%
%
A typical setting for this type of problems is solute transport in groundwater flow, whose steady-state Darcy velocity (flux) $\mathbf q(\mathbf x) = - K\nabla h $ is computed from the groundwater flow equation
 \begin{equation}
\nabla\cdot(K\nabla h) = 0.
\end{equation}
Here $h(\mathbf x)$ is the hydraulic head, and $K(\mathbf x)$ is the hydraulic conductivity of a heterogeneous subsurface environment; in our simulations we use a rectangular simulation domain $\mathcal D = \{ \mathbf x = (x,y)^\top : (x,y)\in [0,128]\times [0,64] \}$ and the $K(\mathbf x)$ field  in Figure~\ref{fig18} (these and other quantities are expressed in consistent units). The boundary conditions are $h(x=0,y) = 1, h(x = 128,y) = 0$ and impermeable on $y = 0, y = 64$.

\begin{figure}[H]
\begin{center}
\includegraphics{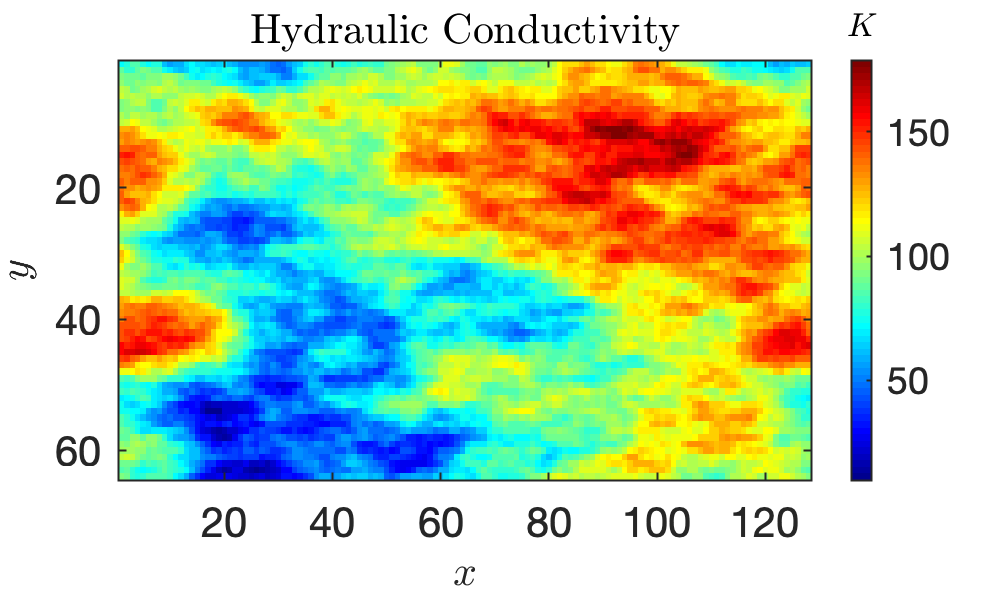}
\end{center}
\caption{Spatial distribution of hydraulic conductivity $K(\mathbf x)$ used in our simulations.}
\label{fig18}
\end{figure}

The resulting macroscopic velocity $\mathbf v(\mathbf x) = \mathbf q / \omega$, with $\omega$ denoting the porosity, is then used in the advection-dispersion equation to calculate the contaminant concentration $u(x,y,t)$:
\begin{equation}\label{eq:ade}
\frac{\partial u}{\partial t} + \mathbf v\cdot\nabla u=  \nabla\cdot(\mathbf D\nabla u), \qquad \mathbf x \in \mathcal D, \quad t \in (0,T],  
\end{equation}
with $T = 80$. In general, the dispersion coefficient $\mathbf D$ is a second-rank semi-positive definite tensor, whose components depend on the magnitude of the flow velocity, $|\mathbf u|$. Here, for illustrative purposes, we treat it as the identity matrix, $\mathbf D = \mathbf I$. The boundary conditions for~\eqref{eq:ade} are $u(0,y,t) = 0.2$ and $\partial_x u(128,y,t) = \partial_y u(x,0,t) = \partial_y u(x,64,t) = 0$. The training is done for the initial condition $u(x,y,0) = u_\text{in}(x,y)$ with
\begin{align}
u_\text{in} = s \exp[-(x-x_\text{s})^2-(y-y_\text{s})^2],
\end{align}
where $s = 100$ and the coordinates of the plume's center of mass, $(x_s,y_s)$ are treated as independent random variables with uniform distributions, $x_s\sim\mathcal U[0,25]$ and $y_s\sim \mathcal U[0,64]$. We generate $N_\text{MC}$ realizations of the pairs $(x_s,y_s)$ and evaluate the corresponding initial conditions $u_\text{in}^{(n)}(\bold x)$ for $n = 1,\dots, N_\text{MC}, \ N_\text{MC} = 2000$. For each of these realizations, \eqref{eq:ade} is solved\footnote{The reference solutions are obtained with the groundwater flow simulator MODFLOW and the solute transport simulator MT3DMS, both ran on a uniform mesh $\Delta x = \Delta y = 1$.}  to compute our quantity of interest, the concentration field $u_T^{(n)}(\bold x) \equiv u^{(n)}(\bold x, T)$. The matrix pairs $\{u_\text{in}^{(n)},u_T^{(n)}\}_{n=1}^{N_\text{MC}}$ are arranged into
\begin{equation}\label{eq:4-10}
\mathbf X = \begin{bmatrix}
\mid&\mid&&\mid\\
\mathbf x^1&\mathbf x^2&\cdots&\mathbf x^{N_\text{MC}}\\
\mid&\mid&&\mid
\end{bmatrix}
\quad\text{and}\quad
\mathbf Y = \begin{bmatrix}
\mid&\mid&&\mid\\
\mathbf y^1&\mathbf y^2&\cdots&\mathbf y^{N_\text{MC}}\\
\mid&\mid&&\mid
\end{bmatrix}
\end{equation}
where $\bold x^n$ is vectorized $u_\text{in}^{(n)}$ and $\bold y^n$ is vectorized $u_T^{(n)}$. Finally, the DMD and xDMD models are deployed to learn the flow map $\boldsymbol \Phi_{\Delta t}$ with the time lag $\Delta t = T$.

\begin{figure}[H]
\includegraphics{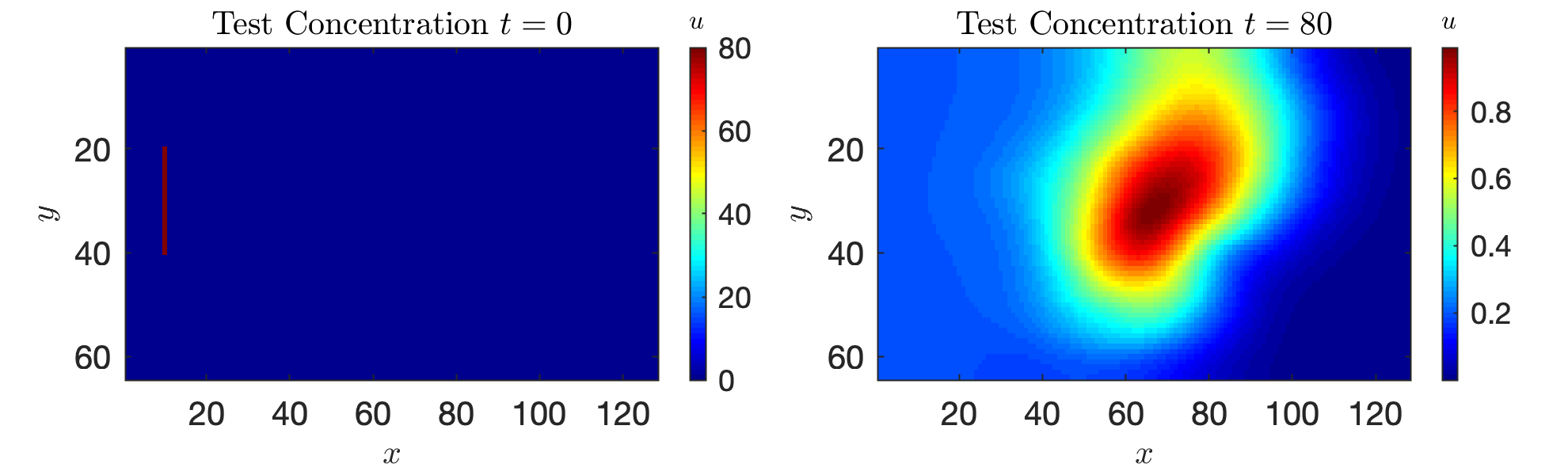}
\includegraphics{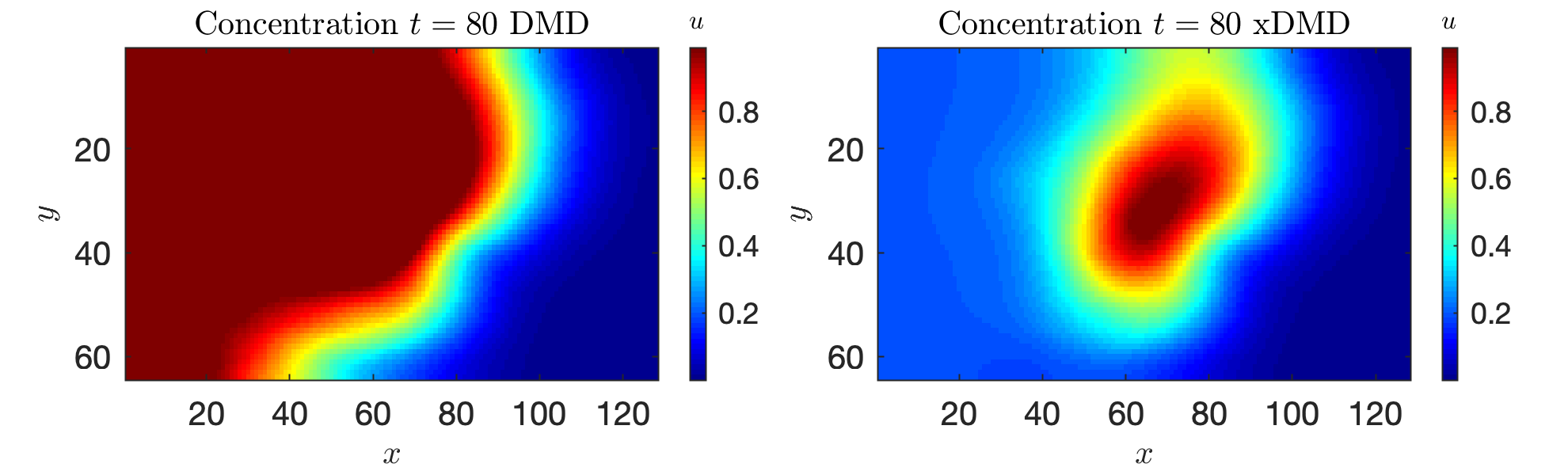}
\includegraphics{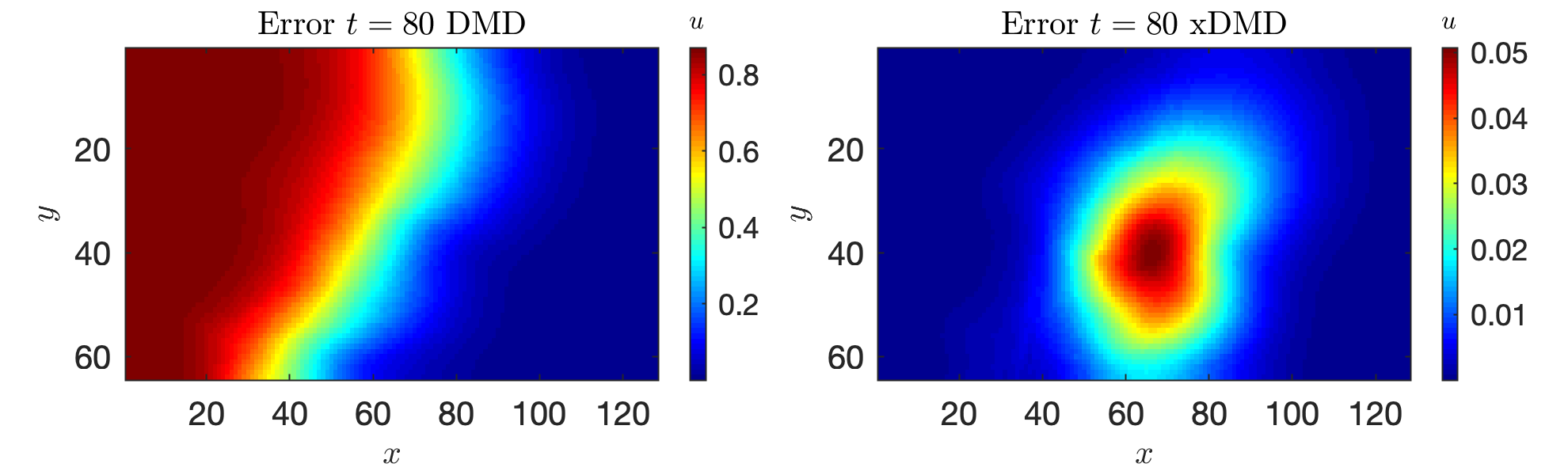}
\caption{Solute concentration predicted with the reference solution and the DMD and xDMD models for the initial condition not seen during training. Also shown are the absolute error of DMD and xDMD.}
\label{fig20}
\end{figure}

Our goal here is to test the ability of these models to predict $u(\bold x, T)$ for other initial conditions, such as the line source
\begin{equation}
u_\text{in} = \left\{\begin{aligned}
&80&x = 10, y\in[20,40],\\
&0&\mbox{otherwise.}
\end{aligned}\right.
\end{equation}

In Figure~\ref{fig20}, we compare the ability of DMD and xDMD to predict a quantity of interest, i.e., $u(\bold x,T)$, for an initial condition that is qualitatively different from that for which they were trained\footnote{The results for an initial condition given by a linear combination of two Gaussians are presented in Supplemental Material.}. While xDMD performs well in this generalizability test, DMD yields a wrong output concentration map because of its failure to handle inhomogeneity. The prediction error is largest in the vicinity of the left boundary, along which the inhomogeneous Dirichlet boundary condition is prescribed.

\section{Conclusions and Future Work}
\label{sec5}

We presented an extended DMD (xDMD) framework for representation of (linear or nonlinear) in- homogeneous PDEs. Our xDMD borrows from residual learning and bias identification ideas, which originated in the deep neural networks community. It shows high accuracy in learning the underlying dynamics, especially in inhomogeneous systems for which standard DMD fails. The inhomogeneous source can be accurately learned from the bias term at no extra computational cost. We conducted a number of numerical experiments to demonstrate that xDMD is an effective data-driven modeling tool and offers better accuracy than the standard DMD.

Although xDMD provides an optimal linear approximation of the unknown dynamics, data-driven modeling for highly nonlinear PDE in general remains a challenging task. Judiciously chosen observables are needed in order to approximate the corresponding Koopman operator, which requires either prior knowledge about the dynamics or dictionary learning. Developments and experiences from deep learning
may again bring potential solutions and vice versa.

In the follow-up work, we plan to use xDMD to construct surrogates, e.g., for Markov Chain Monte
Carlo solutions of inverse problems and for uncertainty quantifications. The verified generalizability will allow us to replace the expensive simulation with xDMD surrogates in each Monte Carlo run. Further
model reduction can be carried out to improve efficiency as well.

\section*{Acknowledgements}
This work was supported in part by by Air Force Office of Scientific Research under award numbers FA9550-17-1-0417 and FA9550-18-1-0474, and by the gift from Total.

\renewcommand\refname{Reference}
\bibliography{xDMD}

\newpage

\begin{center}
\textbf{SUPPLEMENTAL MATERIAL}
\end{center}
\setcounter{equation}{0}
\setcounter{figure}{0}
\setcounter{table}{0}
\makeatletter
\renewcommand{\theequation}{S\arabic{equation}}
\renewcommand{\thefigure}{S\arabic{figure}}

We provide a few additional test cases used to demonstrate the relative performance of DMD and xDMD.

\section*{Boundary Conditions and Noisy Data}

We study the non-homogeneity driven by boundary conditions. Consider the following one dimensional diffusion equation:
\begin{equation}
\partial_t u = D\partial_{xx}u, \ x\in [0,1], t>0, D = 0.1.
\end{equation}

Three different cases are tested to make comparison of DMD and gDMD:
\begin{itemize}
\item Case 1: Dirichlet boundary conditions,
\begin{equation}
\left\{
\begin{aligned}
&u(x,0) = 1,\\
&u(0,t) = 3, u(1,t) = 2.
\end{aligned}\right.
\end{equation}

\item Case 2: Neumann boundary conditions,
\begin{equation}
\left\{
\begin{aligned}
&u(x,0) = \exp(-20(x-0.5)^2),\\
&u_x(0,t) = 0, u_x(1,t) = 0.
\end{aligned}\right.
\end{equation}

\item Case 3: Contaminant training data,
We study the same initial and boundary conditions as in Case 1. The training data is now contaminant with with $0.1\%$ measurement noise.
\end{itemize}

The same spatial and temporal discretization as Test~\ref{test1} is used with the same number of training data. The solution behavior is trivial and thus omitted here. Accuracy is compared between DMD and xDMD in terms of representation, extrapolation and interpolation in Figure~\ref{fig3}.

In Case 1, we observe higher order of accuracy obtained by xDMD than DMD in all three tests of representation, extrapolation and interpolation. DMD can capture the overall solution behavior because the diffusion effect is dominant in the dynamic than the non-homogeneity driven by the boundaries. However, we observe that the DMD model error is mostly distributed near the two boundaries and the error accumulates with time. xDMD, on the other hand, has flat error distribution in the physical domain with much smaller error magnitude. 

In Case 2, which is a homogeneous case, we still observe higher order of accuracy obtained by xDMD than DMD. The improvements are mostly due to the modification in rDMD but also indicate that no sacrifice of accuracy is made by adding the bias. This test guarantees better performance of xDMD without knowledge of homogeneity.

In Case 3, both DMD and xDMD lose several orders of accuracy and behave almost the same in the presence of noise. In the interpolation test, xDMD is even less accurate than DMD. This can be explained by similar over-fitting issue in machine learning: the models with more parameters fit too closely to the limited number of contaminant data and therefore fail to fit additional data or predict future observations reliably. Obviously, xDMD has more parameters than DMD due to the bias term.

\begin{figure}[H]
\includegraphics{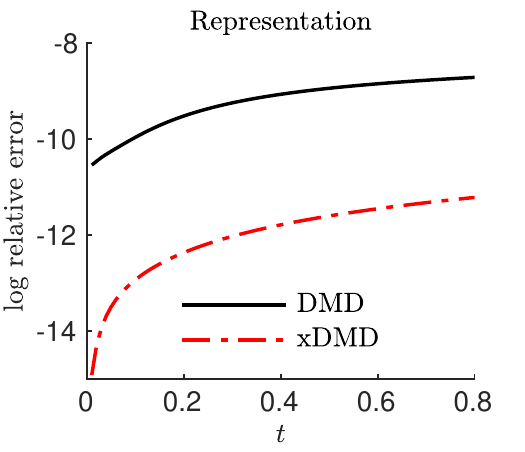}
\includegraphics{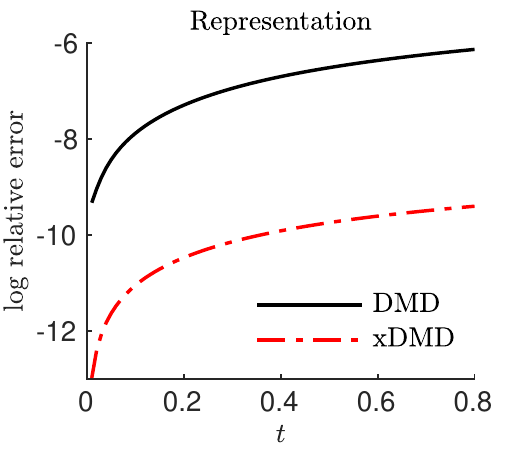}
\includegraphics{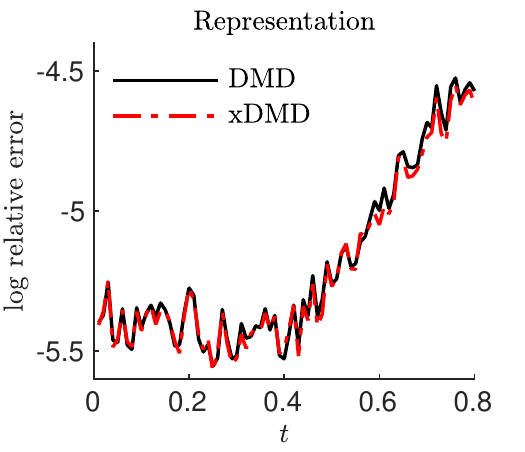}

\includegraphics{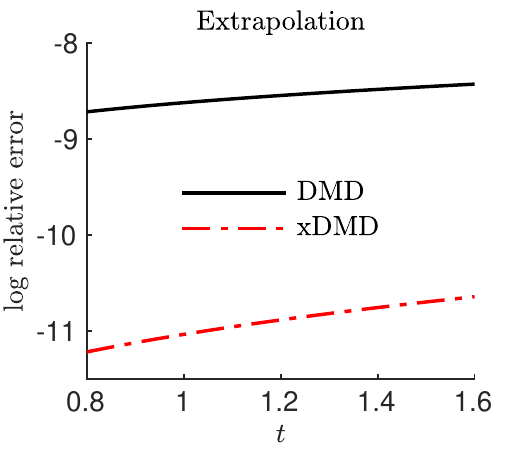}
\includegraphics{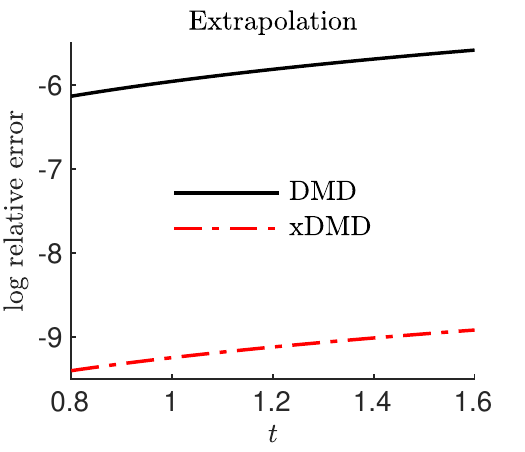}
\includegraphics{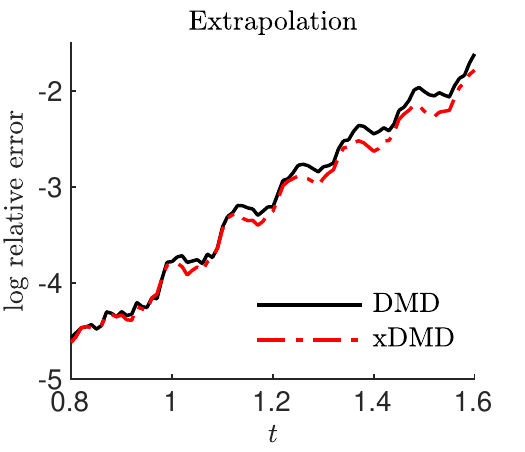}

\includegraphics{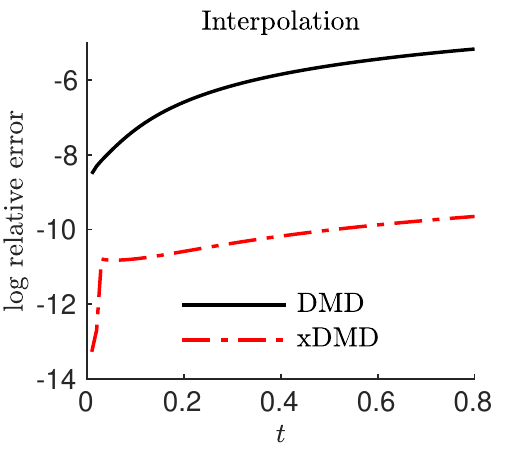}
\includegraphics{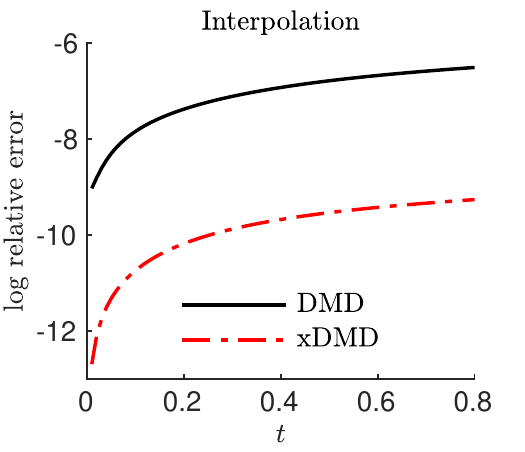}
\includegraphics{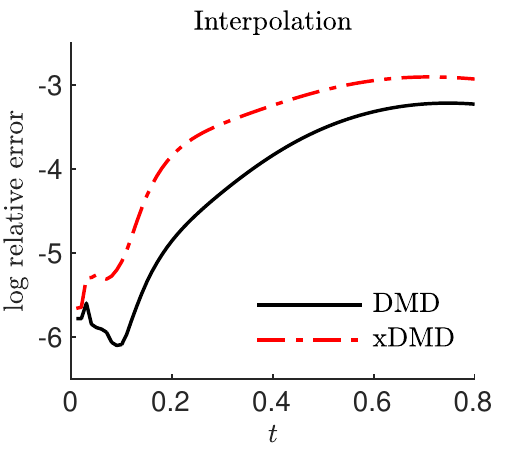}
\caption{\label{fig3}Accuracy tests of Case 1 to 3 in terms of representation, extrapolation and interpolation. Left Column: Case 1; Middle column: Case 2; Right column: Case 3. Top row: representation; middle row: extrapolation; bottom row: interpolation.}

\end{figure}


\section*{2D Viscous Burger's Equation}
\label{test4-4-1}

Consider the following two dimensional viscous Burger's equation with no flux boundary conditions:
\begin{equation}
\left\{
\begin{aligned}
&\partial_t u+ u\partial_x u+v\partial_y u = \nu (\partial_{xx}u+\partial_{yy}u),\\
&\partial_t v+ u\partial_x v+v\partial_y v = \nu (\partial_{xx}v+\partial_{yy}v),\\
&u(x,y,0) = \left\{
\begin{aligned}
1&&(x,y)\in [0.5,1]\times[0.5, 1],\\
0&&\mbox{otherwise}
\end{aligned}
\right.
\end{aligned}
\right. \ \ \nu = 0.05, \ (x,y)\in [0,2]\times[0,2], t\in [0,2],
\end{equation}

\begin{figure}[H]
\begin{center}
\includegraphics{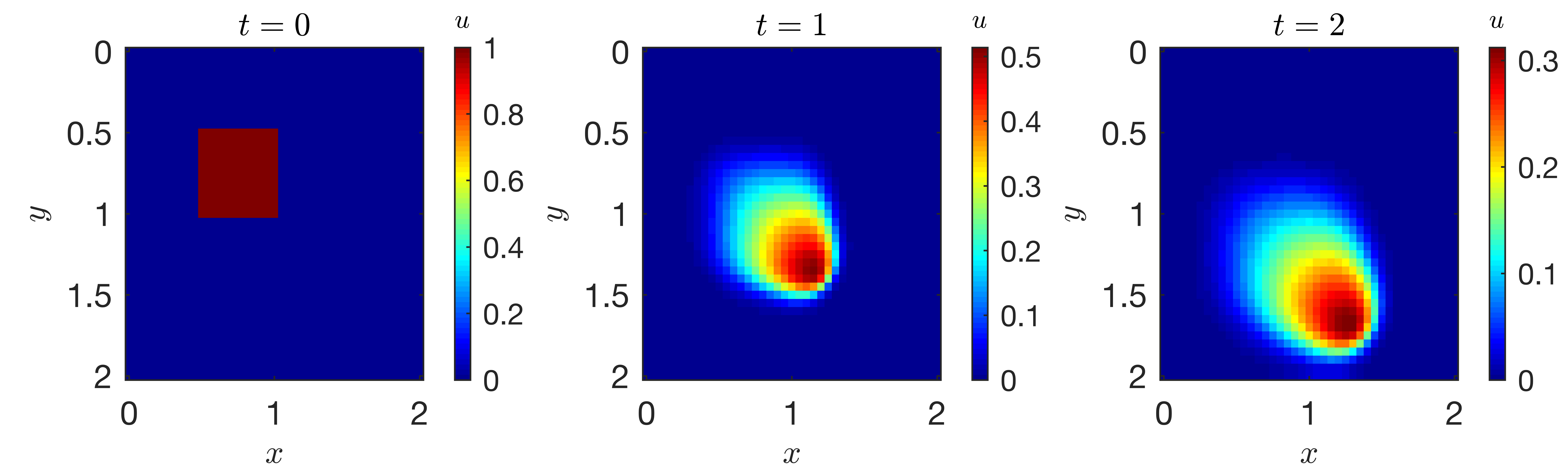}
\end{center}
\caption{Solution behavior of 2D viscous Burger's equation at different times.}
\label{fig7}
\end{figure}

\begin{figure}[H]
\begin{center}
\includegraphics{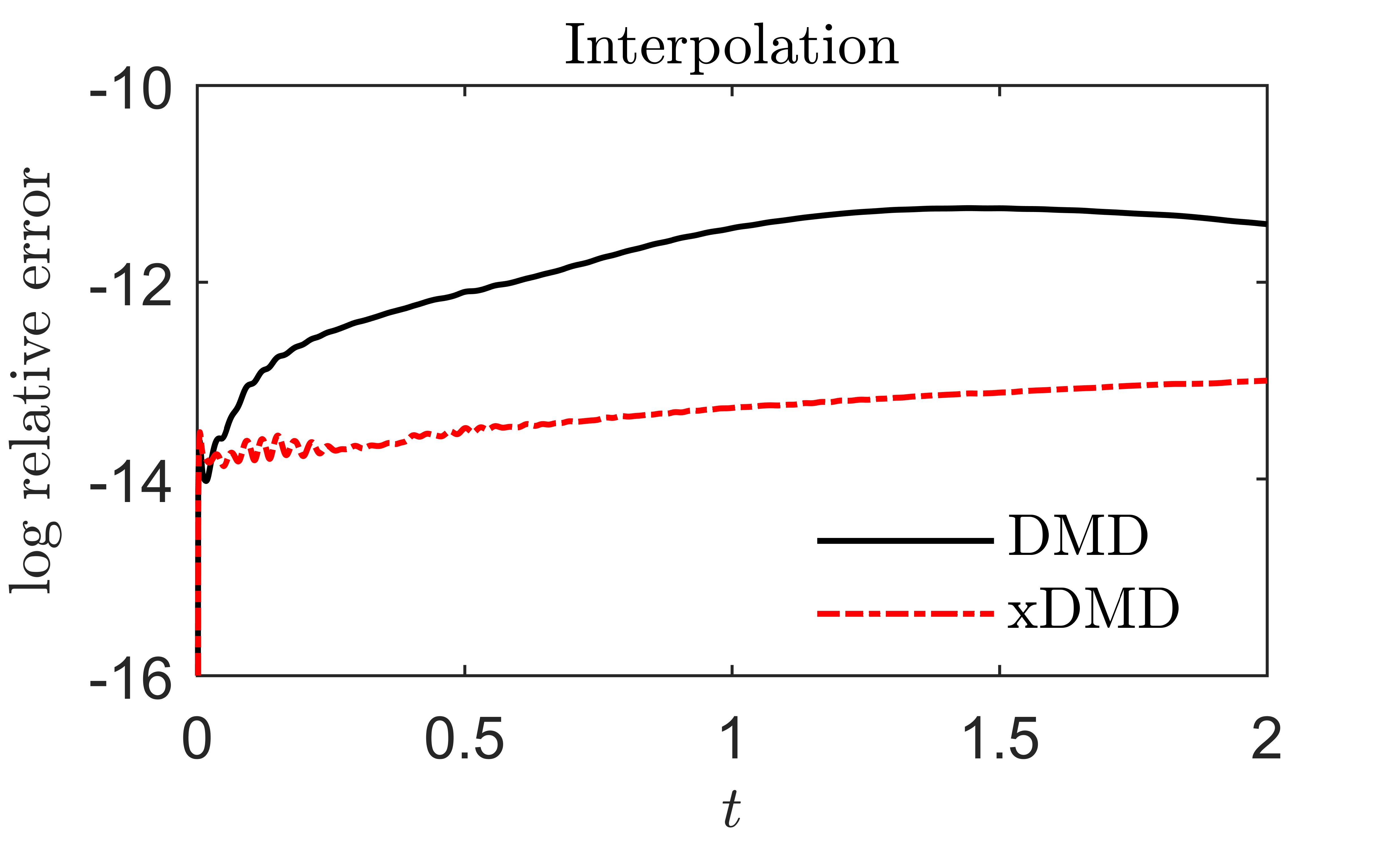}
\end{center}
\caption{Log relative error of DMD and xDMD vs. time in interpolation.}
\label{fig8}
\end{figure}
The reference solution is computed by a finite difference scheme using uniform mesh, where $\Delta x = \Delta y = 0.05$, $\Delta t = 0.001$. The snapshot solution needs to be reshaped into a vectorized form. We randomly select $500$ snapshots out of the $2000$ reference solutions to fill in the training data. Due to the viscosity, the solution presents weak nonlinearity and smooth diffusive profile as shown in Figure~\ref{fig7}. Both DMD and xDMD can capture the nonlinear flow with satisfactory accuracy. We plot the log relative error in Figure~\ref{fig8} and observe smaller error magnitude of xDMD as before.

\section*{1D Advection-Diffusion Equation}
\label{sec4-5-1}

First, we consider a one dimensional advection diffusion equation with a time-independent source:
\begin{equation}\label{eq4-12}
\left\{
\begin{aligned}
& \partial_t u+v\partial_x u=  D\partial_{xx}u+S(x), x\in [-4,4], t\in [0,4],  \ v = 1, \ D= 0.1,\\
&S(x) = \exp(-x^2/0.2).
\end{aligned}
\right.
 \end{equation}
The training will be conducted using the following initial and boundary conditions:
\begin{equation}
\left\{
\begin{aligned}
&u(x,0) = \exp(-(x+2)^2/0.1),\\
&u_x(-4,t) = 0, u_x(4,t) = 0.
\end{aligned}\right.
\end{equation}

The initial condition mimics a point source located at $x = -2$ with strength $1$ and correlation length $\sqrt{0.1}$. We address that the training data should be carefully chosen such that its traveling wave can cover the whole domain of interest and the training time should be long enough. For example in this case, one should choose a training dataset with active pulses all over the domain $[-4,4]$. Otherwise, the data-driven modeling will receive no signal in part of the domain and thus fail to learn the global dynamics. This issue has been discussed in~\cite{lu2020lagrangian} for advection dominant phenomena. 

The training data are collected from reference solutions using a finite difference scheme with $\Delta x = 0.04, \Delta t = 0.04$. As shown in Figure~\ref{fig13}, both DMD and xDMD can represent the training data with satisfactory accuracy. Same as previous tests, xDMD achieves higher order accuracy than DMD. 

\begin{figure}[H]
\includegraphics{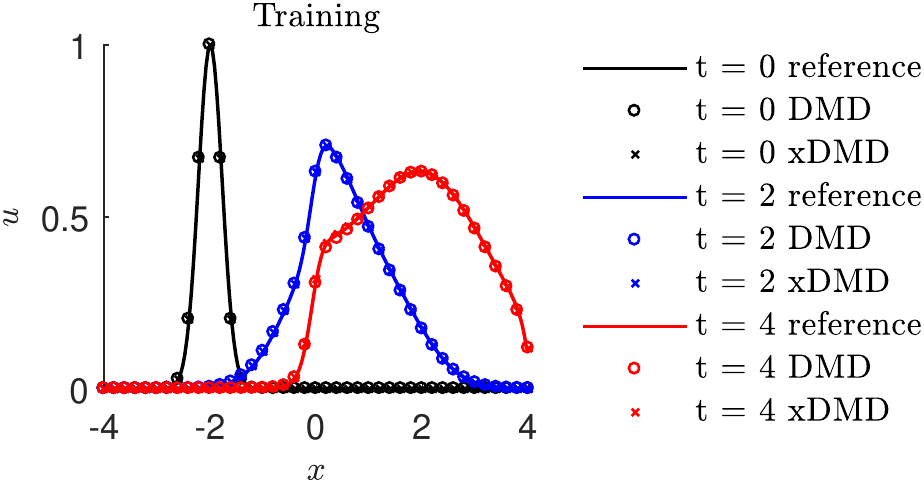}\hspace{1cm}
\includegraphics{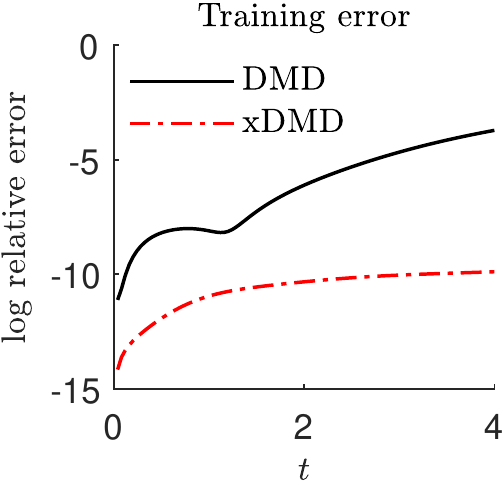}
\caption{DMD and xDMD in representing the training data. Left: modeling solutions compared with the reference solution at different times; Right: log relative error vs. time.}
\label{fig13}
\end{figure}

Essentially, we want the DMD and xDMD models to learn the advection diffusion operator with a fixed source from the training data. If the models are accurate, then for another different  initial inputs, e.g. a point source with different strength, location and correlation length, one can use the DMD and xDMD approximation to output solutions directly without simulating all over again. We use two different types of initial inputs to test the generalizability. In test 1, the initial data is generated from a single point source $u(x,0) = s\exp(-(x-x^0)^2/\sigma^2)$, where $s\sim \mathcal U[1,11], x^0\sim \mathcal U[-2,1], \sigma^2\sim \mathcal U[1/15,1/10]$. In test 2, the initial data is generated from a two-point source $u(x,0) = s_1\exp(-(x-x^0_1)^2/\sigma_1^2)+s_2\exp(-(x-x^0_2)^2/\sigma_2^2)$, where $s_1,s_2\sim \mathcal U[1,11], x^0_1, x^0_2\sim \mathcal U[-2,1], \sigma_1^2,\sigma_2^2\sim \mathcal U[1/15,1/10]$. 

Figure~\ref{fig14} shows that xDMD has superior performance in generalizing the learned model to new and previous unseen inputs. The modeling errors in the two tests are well controlled under reasonable magnitude. DMD, on the other hand, has poor performance in generalization due to the lack of source term identification. The nature of~\eqref{eq4-12} implies that a good model should consist of two parts: one part accounts for the advection-diffusion operator, which is sensitive to the variation of the initial inputs; the other part accounts for the inhomogeneous source term, which is invariant to the initial inputs. This intuition is well cooperated in the framework of xDMD.

\begin{figure}[H]
\includegraphics{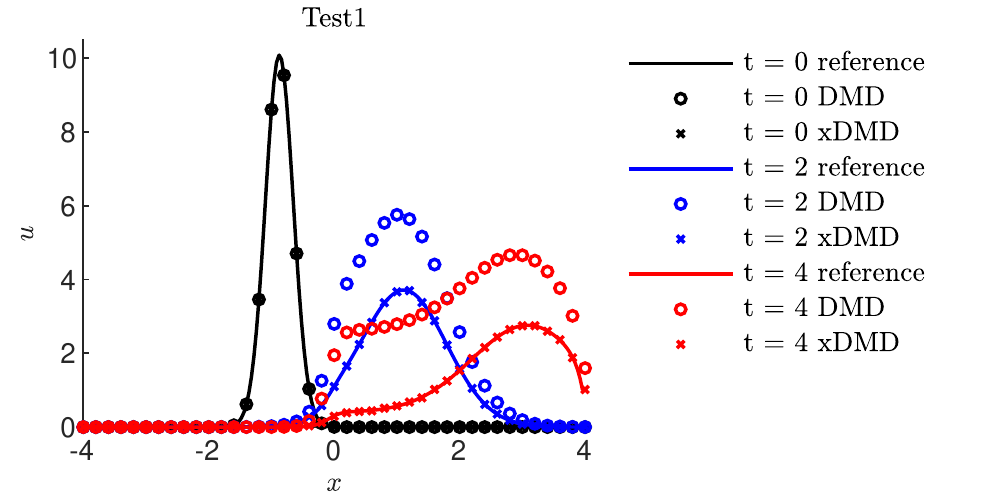}
\includegraphics{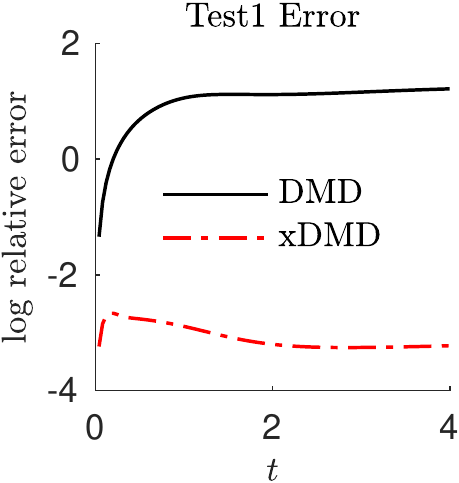}
\includegraphics{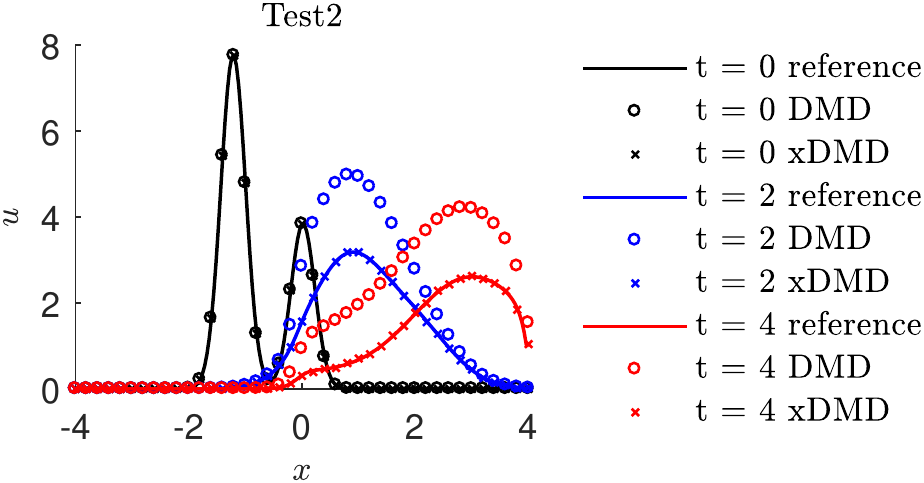}\hspace{1.9cm}
\includegraphics{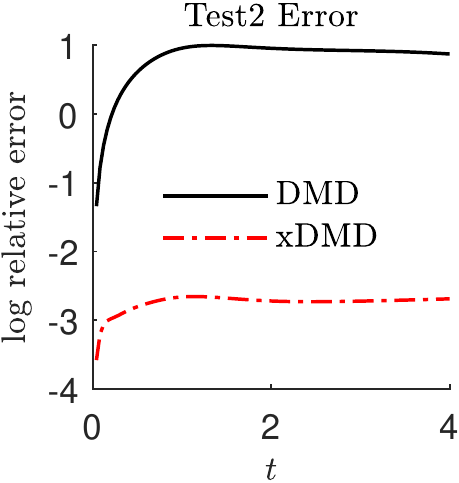}
\caption{DMD and xDMD in generalizability tests. Left: modeling solutions compared with the reference solution at different times; Right: log relative error vs. time.}
\label{fig14}
\end{figure}

\section*{2D Advection-Diffusion Equation}
\label{sec4-5-2}

Next, we consider a two-dimensional advection-diffusion equation with no flux boundary conditions:
\begin{equation}\label{eq4-14}
\left\{
\begin{aligned}
&\partial_t u+\mathbf v\cdot\nabla u=  \nabla\cdot(D\nabla u)+S(x,y), (x,y)\in [0,20]\times [0,10], t\in [0,T], \ \mathbf v = [-2.75,0], \ D= \begin{pmatrix}
0.5&0\\
0&0.5
\end{pmatrix}\\
&S(x,y) = s\exp(-\frac{1}{2\sigma^2}((x-5)^2+(y-5)^2)),\ s = 100, \ \sigma^2 = 0.05.
\end{aligned}
\right.
\end{equation}
This setting can be used to describe the release history of a contaminant in groundwater flows, where $u(x,y,t)$ stands for the concentration of the contaminant. Different from 1D case, it is impossible to select a proper initial inputs whose traveling wave can effectively cover the whole two dimensional domain. The generalizability will be tested in a different way as follows. The training data is done for the initial condition $u(x,y,0) = u_\text{in}(x,y)$ with 
\begin{equation}
u_\text{in} = s\exp\left[-\frac{1}{2\sigma^2}\left((x-x_s)^2+(y-y_s)^2\right)\right],
\end{equation}
where $s = 100, \sigma^2 = 0.05$ and the coordinates of the plume's center of mass, $(x_s,y_s)$ are treated as independent random variables with uniform distributions, $x_s\sim\mathcal U[0,10]$ and $y_s\sim\mathcal U[0,10]$. We generate $N_\text{MC}$ realizations of the pairs $(x_s,y_s)$ and evaluate the corresponding initial conditions $u_\text{in}^{(n)}(\bold x)$ for $n = 1,\dots, N_\text{MC}, \ N_\text{MC} = 4000$. For each of these realizations, \eqref{eq4-14} is solved to compute $u_T^{(n)}(\bold x) \equiv u^{(n)}(\bold x, T), \ T = 4$ using a finite difference scheme with $\Delta x = \Delta y = 0.25$. The matrix pairs $\{u_\text{in}^{(n)},u_T^{(n)}\}_{n=1}^{N_\text{MC}}$ are arranged into data matrix $\bold X$ and $\bold Y$ as in~\eqref{eq:4-10}. Finally, the DMD and xDMD models are deployed to learn the flow map $\boldsymbol \Phi_{\Delta t}$ with the time lag $\Delta t = T$. We evaluate the DMD and xDMD models in test data of the following three types:

\begin{itemize}
\item Test 1: Initial input is a single point source with different strength, location and correlation length:
\begin{equation}
u^0= s\exp\left[-\frac{1}{2\sigma^2}((x-x_{s})^2+(y-y_{s})^2))\right], s =\mathcal U(50,100), \sigma^2 = \mathcal U(0.02,0.1), x_{s}\sim \mathcal U[0,10],y_{s}\sim \mathcal U[0,10].
\end{equation}

\item Test 2: Initial input is a two-point source with different strength, location and correlation length:
\begin{equation}
\begin{aligned}
&u^0= s_1\exp\left[-\frac{1}{2\sigma_1^2}(x-x_{s_1})^2+(y-y_{s_1})^2)\right]+ s_2\exp\left[-\frac{1}{2\sigma_2^2}(x-x_{s_2})^2+(y-y_{s_2})^2)\right],\\
& s_{1},s_2 = \mathcal U(50,100)), \ \sigma_{1}^2,\sigma_2^2 = \mathcal U(0.02,0.1), \ x_{s_{1}},x_{s_2}\sim \mathcal U[0,10], \ y_{s_{1}},y_{s_2} \sim \mathcal U[0,10].
\end{aligned}
\end{equation}

\item Test 3: Initial input is a fixed strength line source:
\begin{equation}
u^0 = \left\{\begin{aligned}
&75&x = 5, y\in[3,6],\\
&0&\mbox{otherwise.}
\end{aligned}\right.
\end{equation}
\end{itemize}

Figure~\ref{fig15} shows the success of xDMD in learning the time lag $\Delta t = 4$ flow map with a totally different initial inputs than the training data. As long as the boundary conditions and source term are the same, the output concentration only depends on the input initial release. The xDMD modeling can recover the $\Delta t = 4$ flow map with high accuracy. However, DMD fails the generalization test due to the same reason as in section~\ref{sec4-5-1}. We notice that the error map of DMD has a peak centered at $(5, 5)$, which is the location of the source $S$ in equation~\eqref{eq4-14}. This further verifies that the loss of accuracy is caused by the shortcoming of DMD in identifying the inhomogeneous source term. 
\begin{figure}[H]
\includegraphics{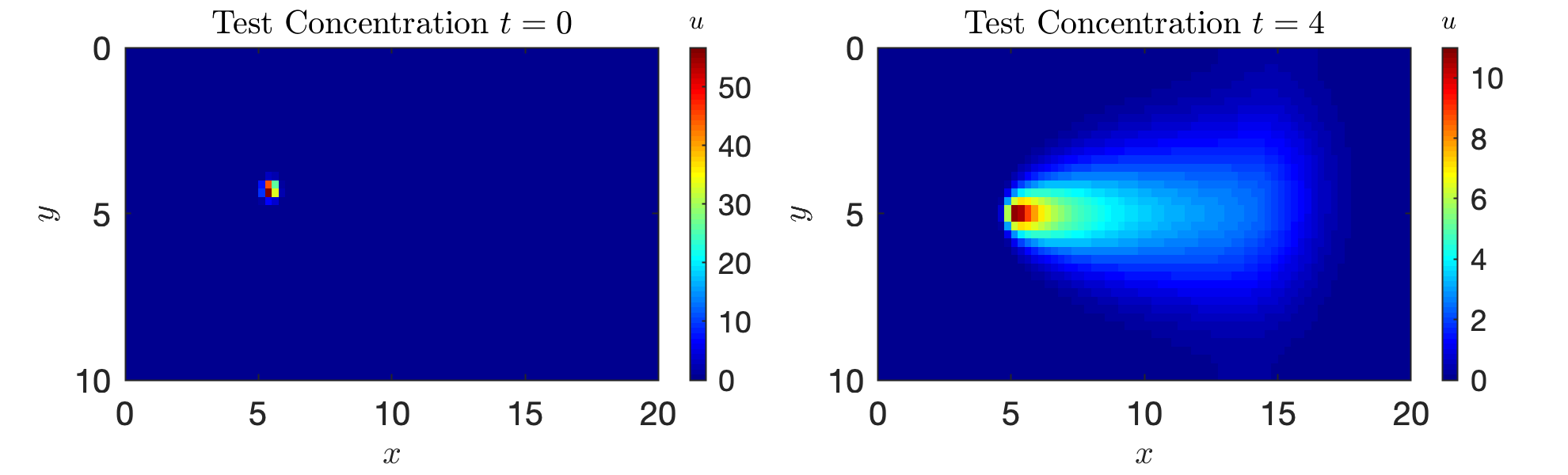}
\includegraphics{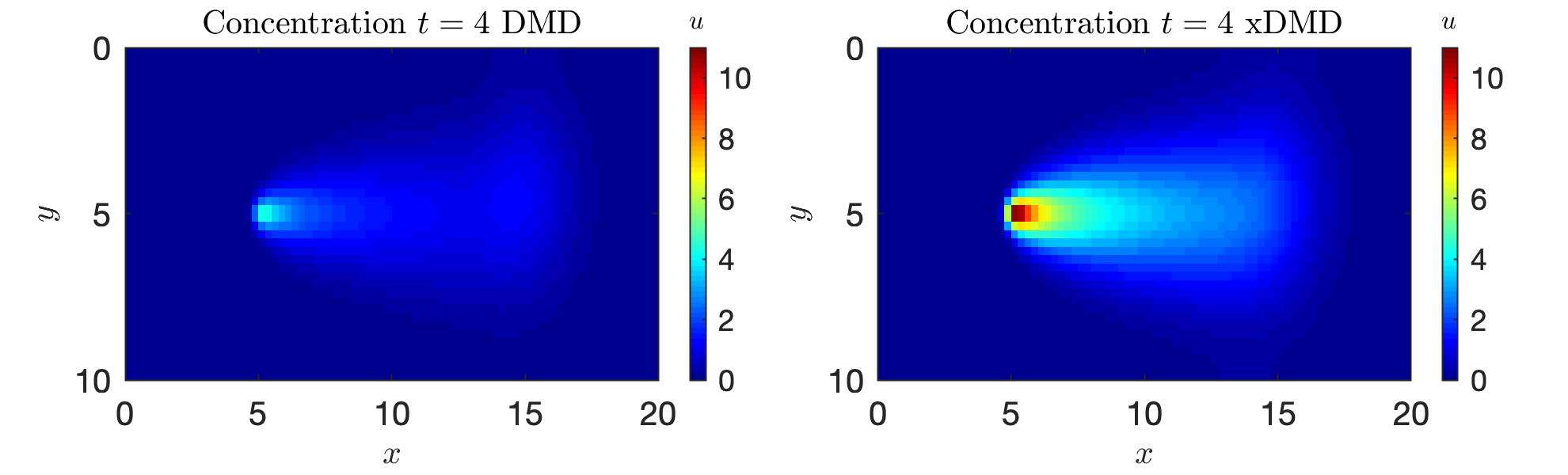}
\includegraphics{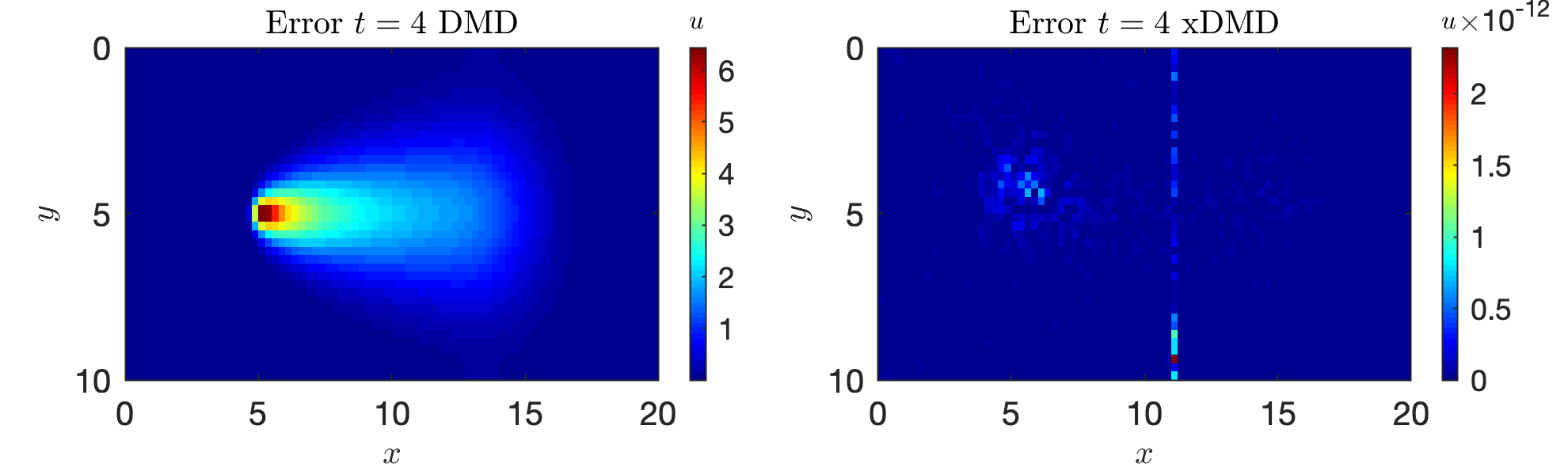}
\caption{Test 1: DMD and xDMD modeling solutions compared with the reference solution and absolute error maps.}
\label{fig15}
\end{figure}

Next in Figure~\ref{fig16}, we observe similar results of Test 2 as in Test 1. xDMD modeling is able to accurately output the concentration at $T = 4$ from the two-point source initial input. We notice that the right corner concentration tail is mostly caused by the advection-diffusion effect on the north-east point source. This pure advection-diffusion dynamic can be well captured by DMD as shown by the flat low error concentration in the DMD error map. The error peak is at $(5,5)$ again, showing the significant effect of identifying the source.

\begin{figure}[H]
\includegraphics{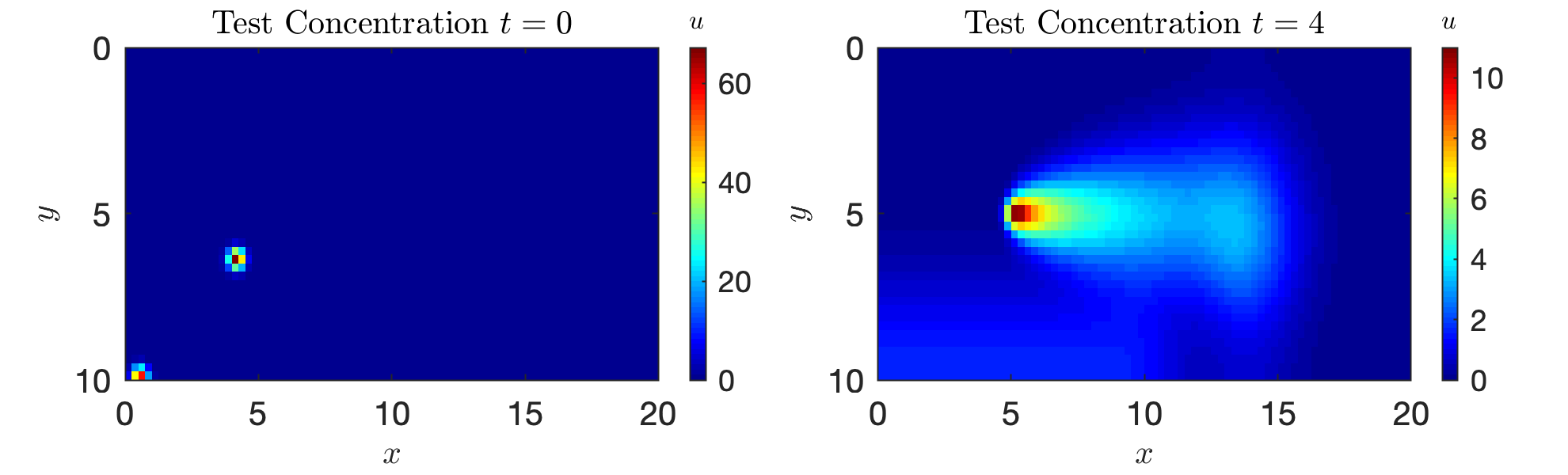}
\includegraphics{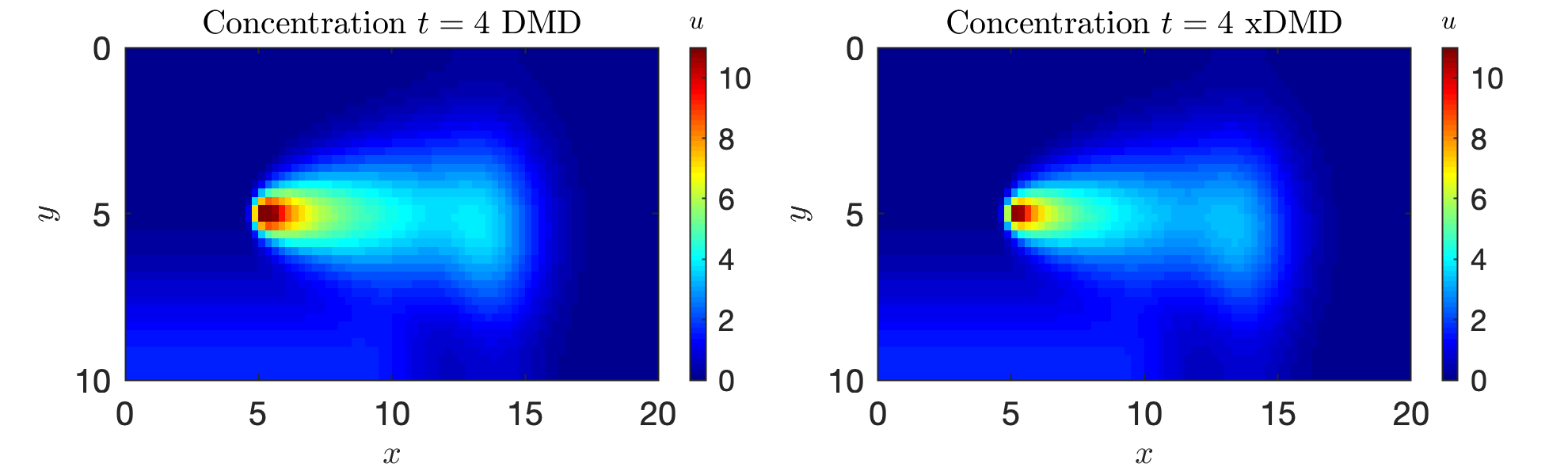}
\includegraphics{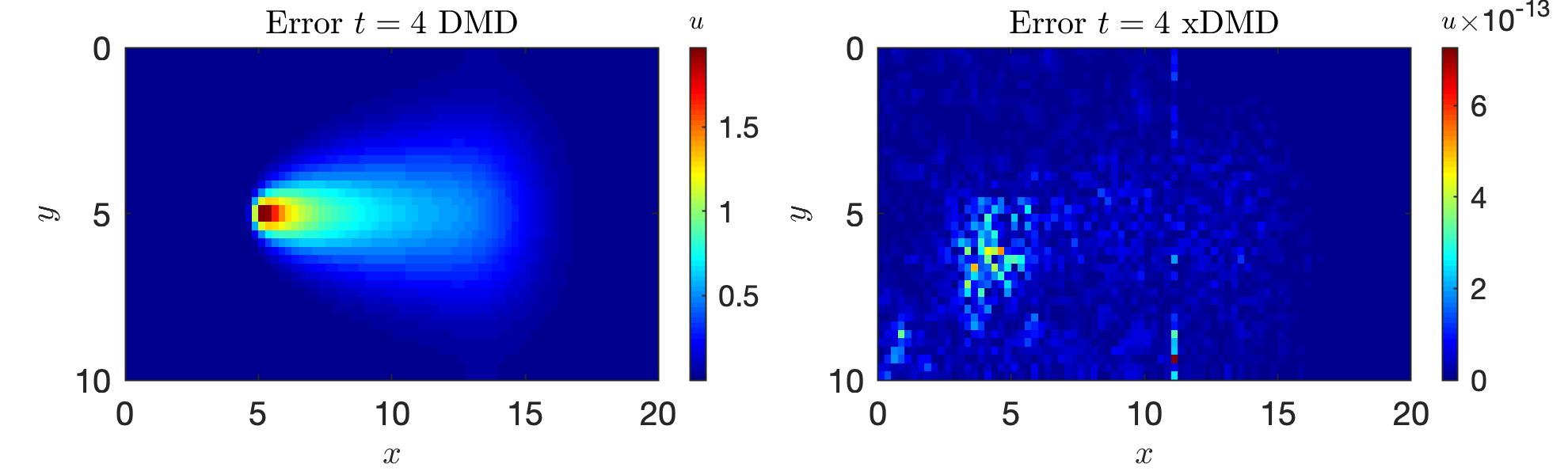}
\caption{Test 2: DMD and xDMD modeling solutions compared with the reference solution and absolute error maps.}
\label{fig16}
\end{figure}

Then in Figure~\ref{fig17}, we show the results in testing a line source initial input. Although the solution of single point source, two point source and line source present quite different features, all of them are essentially a linear superposition of the training single-point sources. Therefore, all of the three types inputs can be regarded as drawn from the same distribution. xDMD again achieves satisfactory accuracy in this generalizability test and DMD appears similar error map pattern centered at $(5,5)$ as before.

\begin{figure}[H]
\includegraphics{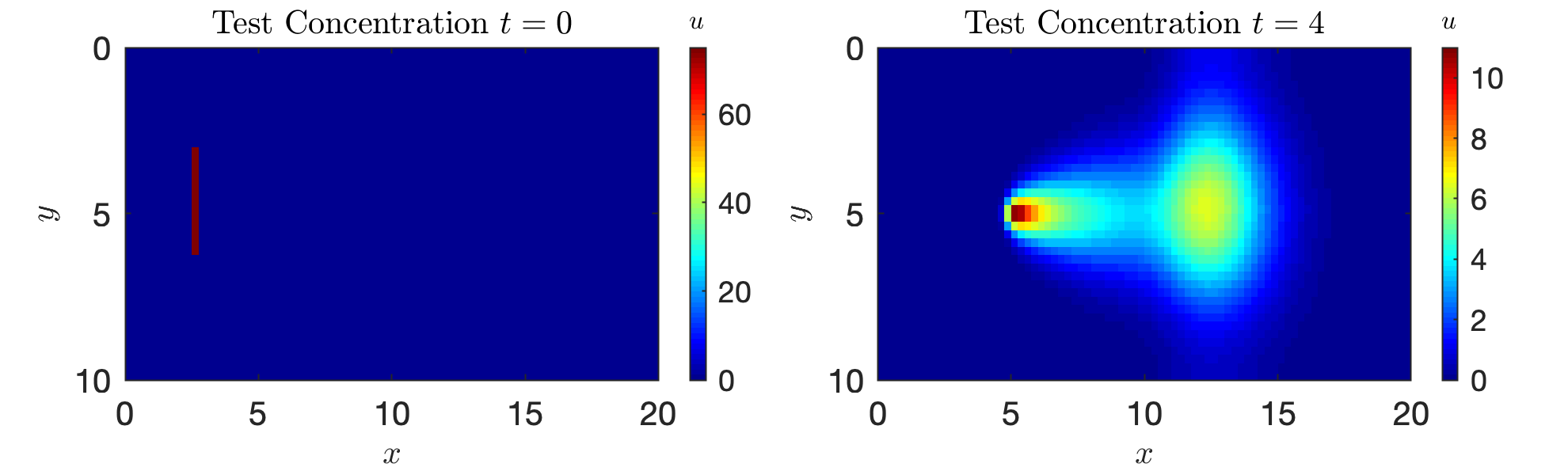}
\includegraphics{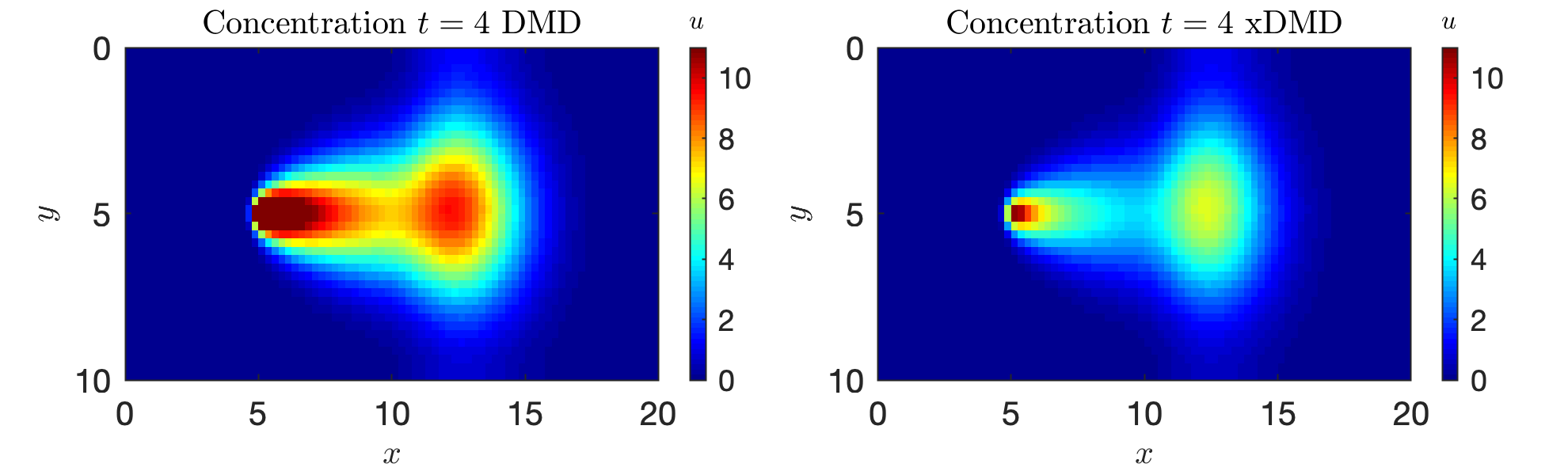}
\includegraphics{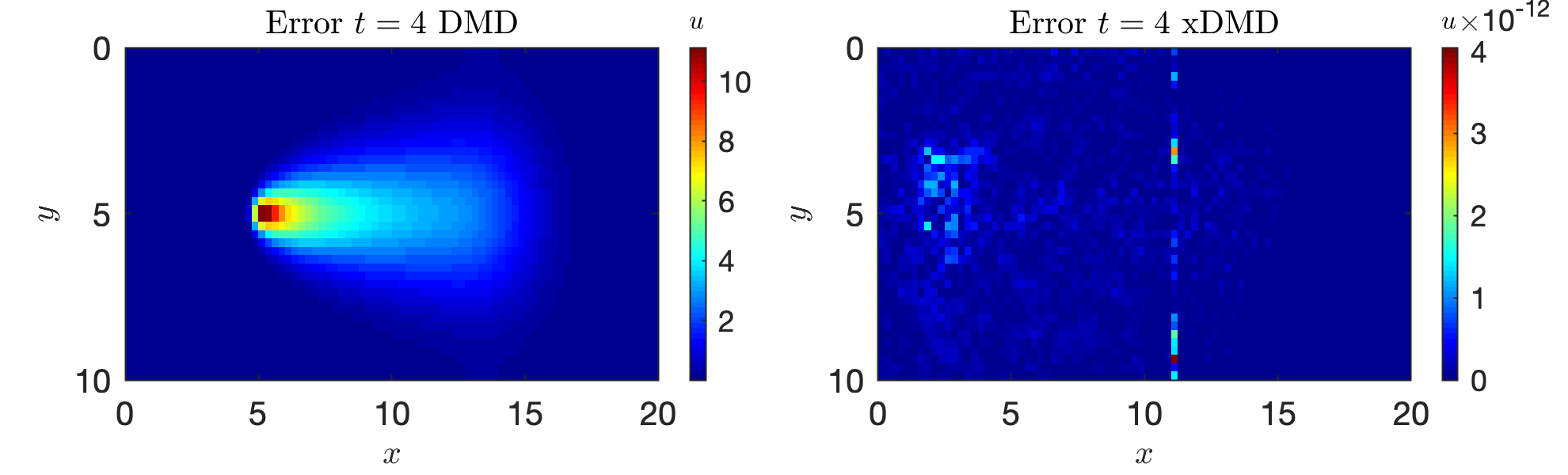}
\caption{Test 3: DMD and xDMD modeling solutions compared with the reference solution and absolute error maps.}
\label{fig17}
\end{figure}

\section*{Generalizability to New Inputs}
The setting is identical to that in Section~\ref{sec:generalizability}. Our goal here is to test the ability of these models to predict $u(\bold x,T)$ for other initial conditions, such as a two-point source with different strength and locations:

\begin{equation}
u_\text{in}(\bold x)= s_1\exp(-(x-x_{s_1})^2+(y-y_{s_1})^2)+ s_2\exp(-(x-x_{s_2})^2+(y-y_{s_2})^2,
\end{equation}
where $s_{1} = 50, \ s_2 = 80, \ (x_{s_{1}}, y_{s_{1}}) = (10,40),  \ (x_{s_{2}}, y_{s_2}) = (20,20)$. 
\begin{figure}[H]
\includegraphics{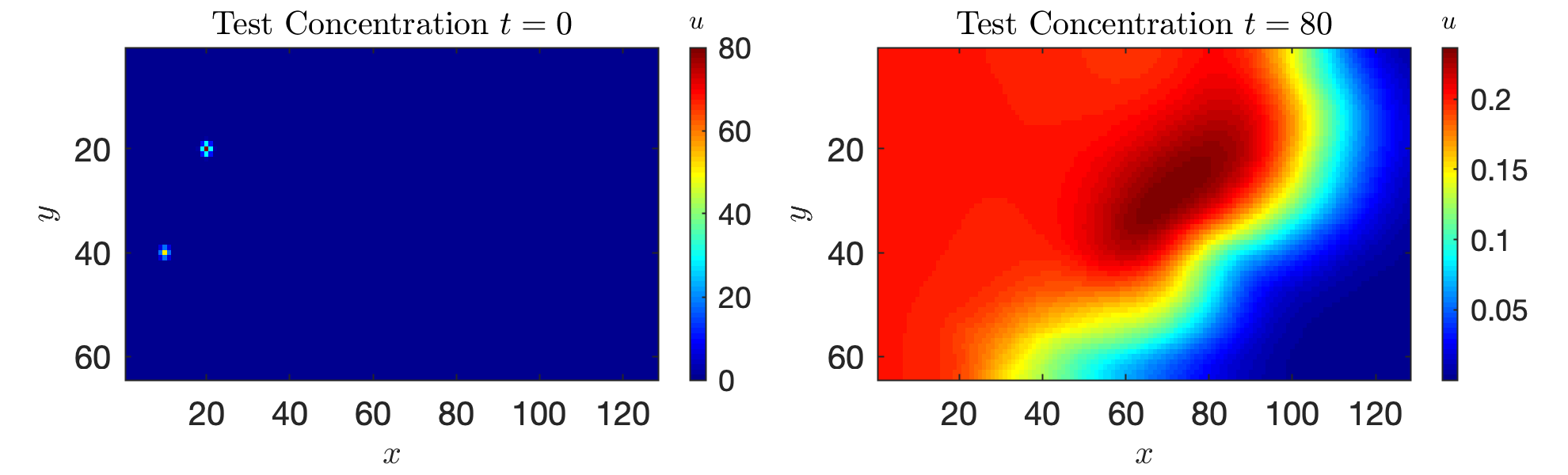}
\includegraphics{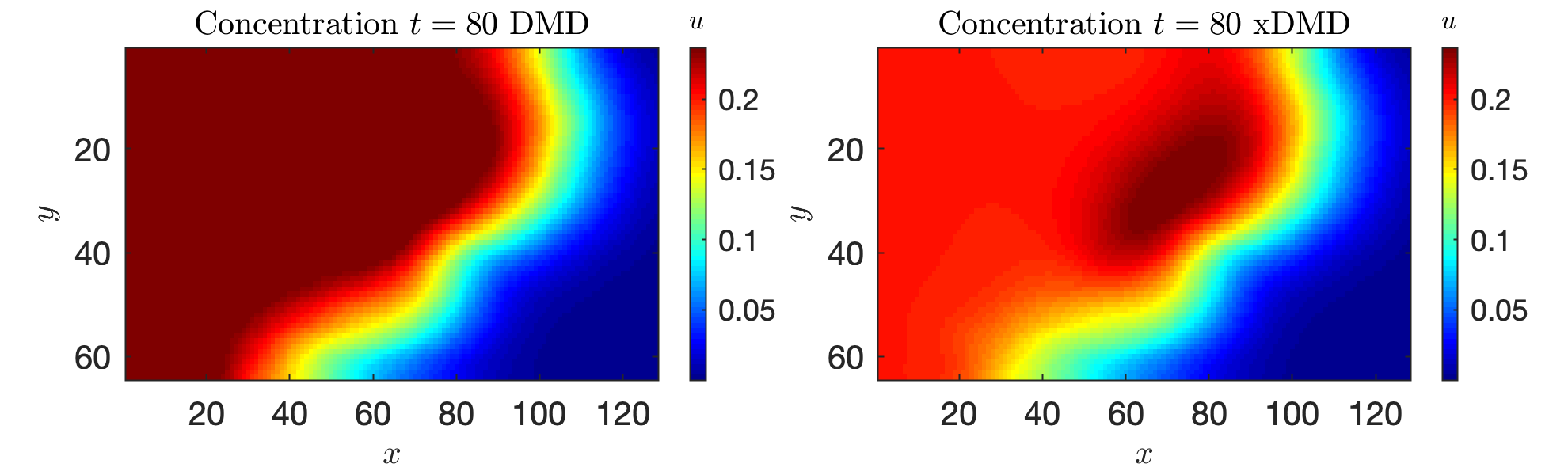}
\includegraphics{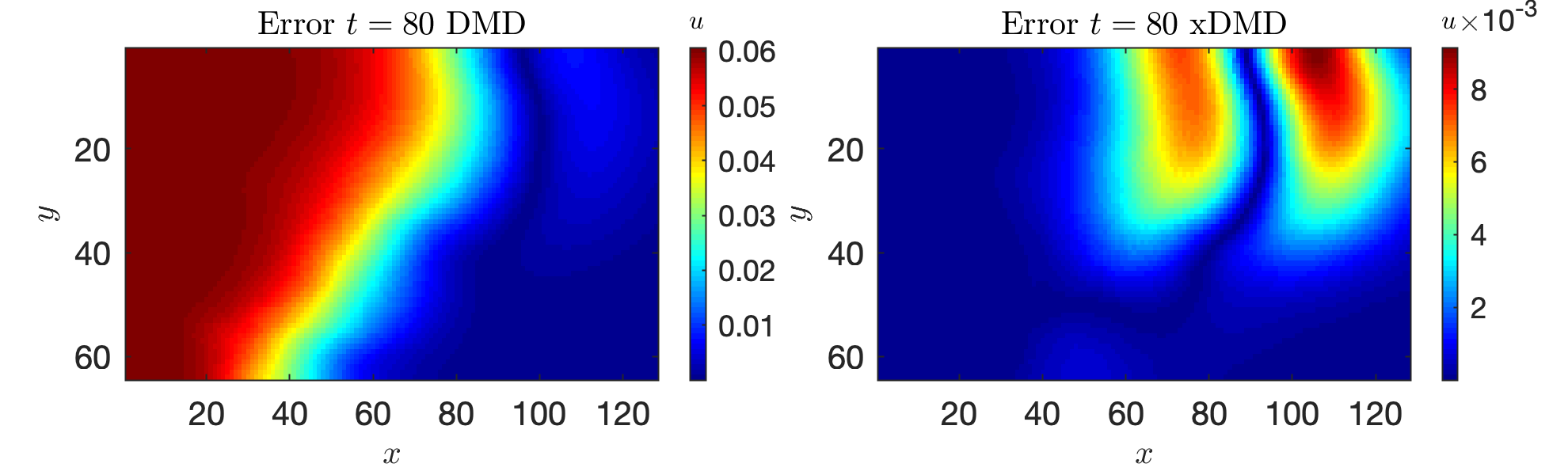}
\caption{DMD and xDMD modeling solutions compared with the reference solution and absolute error maps.}
\label{fig19}
\end{figure}

Figure~\ref{fig19} shows the success of xDMD in learning the time lag $\Delta t = 80$ flow map with a totally different initial inputs than the training data. The error map of xDMD presents very small magnitude and indicates the high accuracy of xDMD in this generalized test. On the other hand, DMD predicts a very different concentration map and fails the generalization test. Similarly as before, we observe that the error map of DMD arising from the left boundary, where Dirichlet boundary condition is imposed. This visualization again addresses the significant role of the bias term added in the new xDMD framework.

\end{document}